\numberwithin{equation}{section}
\newcommand{\qtq}[1]{\quad\text{#1}\quad}
\newcommand{\R}{\mathbb{R}}
\newcommand{\C}{\mathbb{C}}
\newcommand{\eps}{\varepsilon}
\newcommand{\F}{\mathcal{F}}
\newtheorem{theorem}{Theorem}[section]
\newtheorem{lemma}[theorem]{Lemma}
\newtheorem{proposition}[theorem]{Proposition}
\theoremstyle{definition}
\newtheorem{definition}[theorem]{Definition}
\theoremstyle{remark}
\newcommand{\cgc}[1]{\color{red} {\tt [GC: #1]}  \color{black}}
\begin{document}

\title[Recovery of the nonlinearity]{Recovery of the nonlinearity from  \\ the modified scattering map}

\begin{abstract} We consider a class of one-dimensional nonlinear Schr\"odinger equations of the form
\[
(i\partial_t+\Delta)u = [1+a]|u|^2 u.
\]
For suitable localized functions $a$, such equations admit a small-data modified scattering theory, which incorporates the standard logarithmic phase correction.  In this work, we prove that the small-data modified scattering behavior uniquely determines the inhomogeneity $a$. 
\end{abstract}

\author{Gong Chen}
\address{Georgia Institute of Technology}
\email{gc@math.gatech.edu}

\author{Jason Murphy}
\address{Missouri University of Science \& Technology}
\email{jason.murphy@mst.edu}

\maketitle

\section{Introduction}

We consider one-dimensional nonlinear Schr\"odinger equations of the form
\begin{equation}\label{nls}
\begin{cases}
(i\partial_t + \Delta) u = [1+a]|u|^2 u, \\
u|_{t=0}=u_0,
\end{cases}
\end{equation}
where the inhomogeneity $a:\R\to\R$ is a localized function of $x\in\R$.  For suitable functions $a$, equation \eqref{nls} admits a small-data modified scattering theory for initial data chosen from a weighted Sobolev space. In this paper, we prove that the modified scattering map uniquely determines the inhomogeneity $a$. 

We first describe the class of inhomogeneities considered in this work:
\begin{definition}[Admissible]\label{D:admissible}
We say $a:\R\to\R$ is \emph{admissible} if $a\in L^1\cap L^\infty$, $xa\in L^2$, and $\partial_x a\in L^1$.
\end{definition}

For admissible inhomogeneities $a$, we may obtain the following modified scattering result for small initial data in a weighted Sobolev space, which incorporates the typical logarithmic-type phase correction.  In the notation below, $\F$ denotes the Fourier transform and $e^{it\Delta}=\F^{-1} e^{-it\xi^2}\F$ is the Schr\"odinger group.

\begin{theorem}[Modified scattering]\label{T1} Let $a:\R\to\R$ be admissible in the sense of Definition~\ref{D:admissible}.  If $\|u_0\|_{H^{1,1}}$ is sufficiently small, then there exists a unique forward-global solution $u$ to \eqref{nls} and $w_+\in L_\xi^\infty$ such that
\begin{equation}\label{modified-scattering}
\lim_{t\to\infty}\biggl\| \exp\biggl\{i\int_0^t |\F e^{-is\Delta}u(s)|^2\tfrac{ds}{2s+1}\biggr\}\F e^{-it\Delta} u(t) - w_+\biggr\|_{L_\xi^\infty} = 0.
\end{equation}
\end{theorem}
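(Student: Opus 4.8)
We outline the argument. The plan is to run the standard fixed-point-plus-bootstrap scheme for one-dimensional cubic modified scattering (Hayashi--Naumkin, Kato--Pusateri, Ifrim--Tataru), treating the inhomogeneous term $a|u|^2u$ as a perturbation which the localization of $a$ renders essentially short-range. We work with the nonlinear profile $f(t)=e^{-it\Delta}u(t)$; in Fourier variables $\F e^{-it\Delta}u(t)=\widehat f(t,\xi)=e^{it\xi^2}\widehat u(t,\xi)$ and
\[
i\partial_t\widehat f(t,\xi)=e^{it\xi^2}\,\F\big[(1+a)|u|^2u\big](\xi).
\]
Local well-posedness in $H^{1,1}$ is standard, and global existence will follow from the a priori bounds. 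Since $1+a$ is real, the mass $\|u(t)\|_{L^2}^2$ and the energy $\tfrac12\|\partial_x u(t)\|_{L^2}^2+\tfrac14\int(1+a)|u|^4\,dx$ are conserved, so $\|u(t)\|_{H^1}$ stays small; the remaining bootstrap quantities are $\jt^{1/2}\|u(t)\|_{L^\infty}$ and $\jt^{-\delta}\|J(t)u(t)\|_{L^2}$, where $J=x+2it\partial_x=e^{it\Delta}x\,e^{-it\Delta}$ and $\delta>0$ is small. Recall $\|Ju\|_{L^2}=\|\partial_\xi\widehat f\|_{L^2}$, that $J$ commutes with $i\partial_t+\Delta$, and the algebraic identities $[J,a]=2it\,\partial_x a$ (as a multiplication operator) and $J(|u|^2u)=2|u|^2Ju-u^2\overline{Ju}$, which give
\[
(i\partial_t+\Delta)Ju=(1+a)\big(2|u|^2Ju-u^2\overline{Ju}\big)+2it\,(\partial_x a)\,|u|^2u.
\]

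Granting the bootstrap, the asymptotics follow the classical route. Expanding the cubic term in Fourier using $\widehat u=e^{-it\xi^2}\widehat f$ and running a two-dimensional stationary-phase analysis around the resonant frequencies $\eta_1=\eta_2=\xi$ yields
\[
i\partial_t\widehat f(t,\xi)=\frac{1}{2t+1}\,|\widehat f(t,\xi)|^2\,\widehat f(t,\xi)+\mathcal E(t,\xi),\qquad \int_1^\infty\|\mathcal E(t)\|_{L^\infty_\xi}\,dt<\infty,
\]
where the off-resonant contributions (time-integrable once one pays the slowly-growing weighted norm of $f$), the harmless passage from $2t$ to $2t+1$, and the entire contribution of $a|u|^2u$ are absorbed into $\mathcal E$. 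For the last of these the estimate is immediate:
\[
\|\F[a|u|^2u]\|_{L^\infty_\xi}\le\|a|u|^2u\|_{L^1_x}\le\|a\|_{L^1}\|u\|_{L^\infty}^3\lesssim\|a\|_{L^1}\,\jt^{-3/2},
\]
which is time-integrable. This is the conceptual heart of the matter: because $a$ is integrable, the inhomogeneous cubic interaction is short-range, so it leaves the logarithmic dynamics untouched and its only imprint is on the eventual value of $w_+$. Setting $B(t,\xi)=\int_0^t|\widehat f(s,\xi)|^2\,\tfrac{ds}{2s+1}$, the function $g(t):=e^{iB(t,\cdot)}\widehat f(t)$ obeys $i\partial_t g=e^{iB}\mathcal E$, hence is Cauchy in $L^\infty_\xi$; its limit $w_+$ satisfies precisely \eqref{modified-scattering}. (In particular $|\widehat f(t,\cdot)|\to|w_+|$ in $L^\infty_\xi$ and $B(t,\xi)$ diverges like $\tfrac12|w_+(\xi)|^2\log t$.) Uniqueness of $u$, and hence of $w_+$, comes from the local theory.

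The real work is in closing the bootstrap with $a$ present. The $L^\infty$-decay reduces to $\|\widehat f(t)\|_{L^\infty_\xi}\lesssim1$ via the refined dispersive estimate $u(t,x)=(2it)^{-1/2}e^{ix^2/4t}\widehat f(t,\tfrac{x}{2t})+r(t,x)$ with $\|r(t)\|_{L^\infty_x}\lesssim t^{-3/4}\|f(t)\|_{H^{0,1}}$, and $\|\widehat f(t)\|_{L^\infty_\xi}\lesssim1$ follows by integrating the profile equation and using the $\mathcal E$-bound above. The one genuinely new difficulty is the commutator term $2it(\partial_x a)|u|^2u$ in the equation for $Ju$, which carries an unfavorable factor $t$. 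Writing
\[
Ju(t)=e^{it\Delta}\big(xf(0)\big)-i\int_0^t e^{i(t-s)\Delta}\,J\big[(1+a)|u|^2u\big](s)\,ds,
\]
the $(1+a)(2|u|^2Ju-u^2\overline{Ju})$ part is absorbed by the usual Gronwall estimate (using $a\in L^\infty$, and $a\in L^2$ against the localized piece), while the new part is handled by the dual Strichartz inequality $\big\|\int_0^t e^{i(t-s)\Delta}F\,ds\big\|_{L^\infty_tL^2_x}\lesssim\|F\|_{L^{4/3}_tL^1_x}$ together with $\|2is(\partial_x a)|u(s)|^2u(s)\|_{L^1_x}\le 2s\|\partial_x a\|_{L^1}\|u(s)\|_{L^\infty}^3\lesssim\|\partial_x a\|_{L^1}\,\langle s\rangle^{-1/2}$, which yields at most $\jt^{1/4}$ growth of $\|Ju(t)\|_{L^2}$ — precisely what the bootstrap can tolerate for sufficiently small data (one may take $\delta=\tfrac14$). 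The admissibility hypotheses $xa\in L^2$ and $\partial_x a\in L^1$ are used exactly in these estimates (and in the analogous ones propagating the remaining weighted norms of $f$). I expect this weighted bound — extracting from the localized, low-regularity coefficient $a$ enough time decay to overcome the factor $t$ generated by the Galilean commutator — to be the main technical obstacle; the rest is the classical one-dimensional cubic argument.
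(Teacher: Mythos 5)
Your scheme (profile, resonant/off-resonant decomposition via the integrating factor, $L^\infty_\xi$ plus weighted $L^2$ bootstrap, with the inhomogeneous cubic as a short-range perturbation) is the same as the paper's, and several of your component estimates match theirs verbatim, e.g.\ the $\|a\|_{L^1}\|u\|_{L^\infty}^3\lesssim\langle t\rangle^{-3/2}$ bound for the $L^\infty_\xi$ contribution of $a|u|^2u$. The gap is in the one place you correctly identified as ``the main technical obstacle,'' namely the commutator term $2it(\partial_xa)|u|^2u$ in the $Ju$ equation, and it is fatal to your bootstrap as stated.

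You estimate this term by the dual Strichartz bound with exponent pair $L^{4/3}_s L^1_x$ and get $\|Ju(t)\|_{L^2}\lesssim\langle t\rangle^{1/4}$, and then assert that $\delta=\tfrac14$ is ``precisely what the bootstrap can tolerate.'' It is not. The off-resonant cubic contribution to $\partial_t\bigl(e^{iB}\hat f\bigr)$ in $L^\infty_\xi$ is estimated (both by you and by the paper) by first using $|e^{-i\eta\sigma/2t}-1|\lesssim(|\eta\sigma|/t)^\theta$ and then a trilinear bound which, with only $\|f\|_{H^{0,1}}$ control, forces $\theta<\tfrac14$ (since one ends up needing $\||x|^{2\theta}f\|_{L^1}\lesssim\|\langle x\rangle f\|_{L^2}$). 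This yields a bound of the form $t^{-1-\theta}\,\|f(t)\|_{H^{0,1}}^3\lesssim t^{-1-\theta+3\delta}\eps^3$, so the $L^\infty$ bootstrap requires $3\delta<\theta<\tfrac14$, i.e.\ $\delta<\tfrac1{12}$. The paper runs with $\theta=\tfrac15$ and $\delta=\mathcal O(\eps^2)$, comfortably inside. With your $\delta=\tfrac14$ the off-resonant piece decays no faster than $t^{-1-\theta+3/4}$, which is not time-integrable for any admissible $\theta$, and the whole scheme collapses.

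The fix is the one the paper uses: replace Strichartz by the dual Kato smoothing estimate (Lemma~\ref{lem:smoothing}), which bounds $\bigl\|\int_0^te^{-i\xi^2s}\phi(\xi)\hat F(s,\xi)\,ds\bigr\|_{L^2_\xi}\lesssim\|F\|_{L^1_xL^2_s}$ for $|\phi(\xi)|\lesssim|\xi|^{1/2}$. The crucial difference in norm geometry is that the time integration is taken \emph{inside} the $x$-integral, in $L^2_s$ rather than $L^{4/3}_s$. The offending time factor $s\|u(s)\|_{L^\infty}^3\sim s\langle s\rangle^{-3/2}$ is borderline $L^2_s$ (its $L^2_s([0,t])$ norm is $\sim(\log\langle t\rangle)^{1/2}$), whereas its $L^{4/3}_s([0,t])$ norm is $\sim t^{1/4}$. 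Thus $L^1_xL^2_s$ extracts the localization of $a$ (or $\partial_x a$) in $x$ at full strength while paying only a logarithm in time, which is exactly why the paper's $\|\hat f\|_{H^1}\lesssim\eps\langle t\rangle^\delta$ closes with $\delta=\mathcal O(\eps^2)$. To make this rigorous one also has to split $\xi\F(a|u|^2u)$ into low frequencies, where $\phi(\xi)=\xi\varphi(\xi)$ satisfies the $|\xi|^{1/2}$ hypothesis directly, and high frequencies, where the extra $\xi$ is converted into $\partial_x$ and distributed onto $\partial_xa\in L^1$ or onto $\partial_x u$ (the latter handled via $s\partial_xu=\tfrac1{2i}(Ju-xu)$); your $L^{4/3}_sL^1_x$ route has no analogue of this low/high split and, more importantly, no way to reach the logarithmic growth needed to make the $L^\infty$ step integrable.
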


Using Theorem~\ref{T1}, we may define the \emph{modified scattering map}. 

\begin{definition}[Modified scattering map]\label{D:map} Let $a$ be admissible in the sense of Definition~\ref{D:admissible}.  Given $\eps>0$, define
\[
B_\eps=\{u_0\in H^{1,1}:\|u_0\|_{H^{1,1}}<\eps\}.
\]
For $\eps$ sufficiently small, we may use Theorem~\ref{T1} to define the \emph{modified scattering map} $S_a:B_\eps\to L^\infty$ by $S_a(u_0)=w_+$, where $w_+$ is as in \eqref{modified-scattering}.
\end{definition}

Our main result shows that the modified scattering map uniquely determines the inhomogeneity $a$.

\begin{theorem}[The modified scattering map determines the nonlinearity]\label{T2} Suppose $a$ and $b$ admissible in the sense of Definition~\ref{D:admissible}. Let $S_a:B\to L^\infty$ and $S_b:B'\to L^\infty$ denote the corresponding modified scattering maps.  

If $S_a=S_b$ on $B\cap B'$, then $a\equiv b$. \end{theorem}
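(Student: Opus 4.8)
The plan is to extract the inhomogeneity $a$ from the modified scattering map by probing it with highly concentrated, small-amplitude data and examining the leading-order correction to linear scattering. The heuristic is that for $\|u_0\|_{H^{1,1}}$ small, the solution is a small perturbation of the free evolution, and the first Picard iterate of the Duhamel formula for $\F e^{-it\Delta}u(t)$ picks up a term proportional to the coefficient $[1+a]$ tested against the (rescaled) profile of the data. Since the homogeneous part ($1\cdot|u|^2u$) contributes identically for $S_a$ and $S_b$, the difference $S_a(u_0)-S_b(u_0)$ isolates the contribution of $a-b$. Concretely, I would fix a Schwartz profile $\phi$, set $u_0^{\lambda,\delta} = \delta\,\phi_\lambda$ for a suitable rescaling $\phi_\lambda$ (e.g. $\phi_\lambda(x)=\lambda^{1/2}\phi(\lambda x)$ or a modulated/translated variant), and study the asymptotic expansion of $S_a(u_0^{\lambda,\delta})$ as $\delta\to 0$. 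The $O(\delta^3)$ term in this expansion should be a bilinear-in-$a$-minus-$b$ functional; letting the concentration parameter $\lambda$ and the modulation/translation parameters vary, this functional ranges over enough test functions to force $a\equiv b$.

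The key steps, in order: (1) Derive from \eqref{nls} the Duhamel/normal-form equation satisfied by $f(t):=\F e^{-it\Delta}u(t)$ in the spirit of the proof of Theorem~\ref{T1}, isolating the resonant term that produces the logarithmic phase and the non-resonant remainder; track the explicit dependence on $a$ through the factor $\widehat{a}$ appearing after Fourier transform in the nonlinearity. (2) Using smallness of the data, run a perturbative expansion: $f(t) = e^{-i\theta(t,\xi)} w_+(\xi) + (\text{error})$, and show the error, as well as the difference between the true phase $\theta$ and the free-data phase, is $o(\delta^3)$ uniformly, while the genuinely new contribution from $a$ enters at order $\delta^3$. (3) Form the difference $S_a(u_0)-S_b(u_0)$, which by hypothesis vanishes on $B\cap B'$; since $B_\eps\subset B\cap B'$ for $\eps$ small enough (both admissibility hypotheses just need $\eps$ below the threshold of Theorem~\ref{T1} for $a$ and for $b$), conclude that the leading $\delta^3$ functional of $a-b$ vanishes identically in all the free parameters. (4) Unwind this to an integral identity of the form $\int (a-b)(x)\,K(x;\text{params})\,dx = 0$ for a family of kernels $K$ that is total in (the dual of) the space where $a-b$ lives, hence $a-b=0$ a.e., and by continuity $a\equiv b$.

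The main obstacle I anticipate is step (2): making the perturbative expansion rigorous and quantitative. The modified scattering statement \eqref{modified-scattering} is only an $L^\infty_\xi$ limit with no rate, so I cannot directly differentiate in $\delta$; instead I will need to re-examine the proof of Theorem~\ref{T1} to obtain a quantitative asymptotic expansion of $f(t)$ with an explicit remainder that is $o(\delta^3)$ as $\delta\to 0$, uniformly in $t$, and with the $\delta^3$-coefficient identified as an explicit (oscillatory) integral involving $\widehat a$. The logarithmic phase correction is a further nuisance here, since the phase itself depends on $|f|^2$ and hence on $a$; I will need to show that the phase's $a$-dependence is higher order (it enters $\theta$ only through the already-$O(\delta^2)$ quantity $|f|^2$, so its effect on $w_+=\lim e^{i\theta}f$ is $O(\delta^2)\cdot O(\delta^3)$, negligible) so that the $O(\delta^3)$ term of $S_a-S_b$ is clean. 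A secondary technical point is choosing the probing family (scaling vs.\ translation vs.\ Galilei modulation) so that the resulting kernels $K(x;\cdot)$ separate points of $a-b$; a Galilei-boosted, spatially concentrated bump should localize the nonlinear interaction near a prescribed point $x_0$, letting me recover $(a-b)(x_0)$ pointwise at Lebesgue points.
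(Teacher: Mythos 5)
Your Steps (1)--(3) track the paper's Proposition~\ref{P:Sa} closely: the paper derives the expansion
\[
\langle S_a(\eps\varphi),\hat\varphi\rangle = \eps\langle\hat\varphi,\hat\varphi\rangle + \tfrac1{2i}\log(1+\tfrac1{2\eps})\langle|S_a(\eps\varphi)|^2 S_a(\eps\varphi),\hat\varphi\rangle + \eps^3\mathcal{Q}_\eps[\varphi] -i\eps^3\!\int_0^\infty\!\!\int_\R a|e^{it\Delta}\varphi|^4\,dx\,dt + \mathcal{O}(\eps^4),
\]
with $\mathcal{Q}_\eps$ independent of $a$, so $S_a=S_b$ forces $\int_0^\infty\int a|e^{it\Delta}\varphi|^4 = \int_0^\infty\int b|e^{it\Delta}\varphi|^4$ for every $\varphi\in\mathcal S$. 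One small caution on your heuristic for the phase: the logarithmic phase is \emph{not} negligible — the resonant term contributes $\tfrac12\log(1+\tfrac1{2\eps})|w_+|^2w_+$, which is size $\eps^3\log\tfrac1\eps$, not $O(\eps^4)$. The paper disposes of it not by showing it is small but by showing (via the quantitative convergence $\|w(t)-w_+\|_{L^\infty_\xi}\lesssim\eps^3\langle t\rangle^{-1/10}$) that it depends on $a$ only through $S_a(\eps\varphi)$ itself, hence cancels identically when comparing $S_a$ to $S_b$. Your ``$O(\delta^2)\cdot O(\delta^3)$'' argument does not close as stated because the phase $\theta(t,\xi)\sim\delta^2\log t$ is unbounded in $t$.

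Your Step (4) — concentrating data at a point, Galilei-boosting, and recovering $(a-b)(x_0)$ at Lebesgue points via dominated convergence — is where the proposal genuinely breaks down, and it is in fact the main point the paper flags as the new difficulty in this setting. For the 1d cubic problem one has $\|e^{it\Delta}\varphi\|_{L_x^4}\sim t^{-1/4}$ for large $t$ (interpolating the $L^2$ conservation with the $t^{-1/2}$ dispersive decay), so $\|e^{it\Delta}\varphi\|_{L^4_{t,x}([0,\infty)\times\R)}=\infty$ for \emph{every} nonzero Schwartz $\varphi$; the kernel $K_\varphi(x)=\int_0^\infty|e^{it\Delta}\varphi(x)|^4\,dt$ is finite pointwise and lies in $L^2_x$, but $K_\varphi\notin L^1_x$. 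Consequently there is no $L^1_{t,x}$ dominating function, dominated convergence fails, and a concentrating family cannot be normalized to an approximate identity (its total mass is already $+\infty$ at every scale; neither scaling nor a Galilei boost changes $\|e^{it\Delta}\varphi\|_{L^4_{t,x}}$). The paper sidesteps this entirely: it takes $\varphi$ Gaussian so that $e^{it\Delta}\varphi$ and hence $K_\varphi$ are explicit, uses translation invariance to turn the vanishing of the functionals into the convolution identity $a\ast K_\varphi\equiv0$, and then computes $\hat K_\varphi(\xi)=\sqrt\pi\int_0^\infty(1+t^2)^{-1/2}e^{-\xi^2(1+t^2)/4}\,dt>0$ for $\xi\neq0$ to conclude $\hat a\equiv0$. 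You would need to replace Step (4) with an argument of this type (explicit kernel and nonvanishing Fourier transform) rather than a concentration argument.
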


Theorem~\ref{T2} fits in the context of a wide body of work on the recovery of nonlinearities (and external potentials) for nonlinear dispersive equations, particularly the question of recovery from scattering data; we refer the reader to \cite{SBUW, SBS2, CarlesGallagher, EW, HMG, KMV, MorStr, Murphy, PauStr, Sasaki, Sasaki2, SasakiWatanabe, Watanabe0, Watanabe, Weder0, Weder1, Weder6, Weder3, Weder4, Weder5} for a broad selection of works in this direction.  The chief novelty in our work stems from the fact that we consider a class of equations for which the usual (unmodified) scattering \emph{fails}.  That is, the long-time behavior of solutions is not simply given by the underlying linear dynamics; instead, due to insufficient time decay in the nonlinear term, one must incorporate a logarithmic phase correction in order to describe the long-time asymptotic behavior.  Consequently, the structure of the modified scattering map is more complicated to describe.  Nonetheless, as we will explain below, this modified map suffices to uniquely determine the inhomogeneity present in the nonlinearity. 

Before discussing the proof of Theorem~\ref{T2}, let us briefly describe the proof of modified scattering for \eqref{nls} (Theorem~\ref{T1}).  Modified scattering for cubic nonlinear Schr\"odinger equations in one dimension is an important topic that has been addressed in many different settings (see e.g. \cite{HayashiNaumkin, LindbladSoffer, KaPu, IfrimTataru, MurphySurvey, ChenPusateri, ChenPusateri0, Naumkin, Naumkin2, DeiftZhou}). In the setting of \eqref{nls}, the inhomogeneous cubic term may be viewed as a short-range perturbation to the long-range nonlinearity $|u|^2 u$; indeed, the inhomogeneity $a(x)$ does not appear in the phase correction itself (cf. \eqref{modified-scattering}).  Our proof of modified scattering follows the basic scheme set out in \cite{KaPu} (based on taking the Fourier transform of the Duhamel formula and using an integrating factor to remove the non-integrable cubic part), using local smoothing estimates (similar to those appearing in \cite{ChenPusateri}) to handle the inhomogeneous cubic term.  For the details, see Section~\ref{S:direct}.

In Section~\ref{S:inverse}, we prove the main result, Theorem~\ref{T2}. Before discussing specific details of the proof, let us first recall the standard approach to recovering the nonlinearity from the usual scattering map (going back at least as far as \cite{MorStr, Strauss}).  To fix ideas, let us consider the problem of recovering an unknown, localized coefficient in a $1d$ nonlinear Schr\"odinger equation of the form
\begin{equation}\label{nls-a}
(i\partial_t + \Delta)u = a|u|^2 u, \quad u|_{t=0}=u_0.
\end{equation}
For $a\in L^1\cap L^\infty$, one can prove that the usual (unmodified) scattering behavior holds for small initial data in $L^2$ (see e.g. \cite{Murphy}); that is, there exists a map $S_a$ such that
\[
\lim_{t\to\infty} \|u(t) - e^{it\Delta} S_a(u_0)\|_{L^2} = 0,
\]
where $u$ is the solution to \eqref{nls-a}.  In fact, using the Duhamel formula, one obtains the following implicit formula for $S_a$:
\[
S_a(u_0) = u_0 -i\int_0^\infty e^{-it\Delta}[a|u(t)|^2 u(t)]\,dt. 
\]
Specializing to $u_0=\eps\varphi$ (with $\varphi\in\mathcal{S}$ and $0<\eps\ll 1$), pairing this identity with $\varphi$, and approximating $u(t)$ by $e^{it\Delta}u_0$ (the Born approximation), one can show that 
\begin{align*}
\langle S_a(\eps\varphi),\varphi\rangle & = \eps\langle \varphi,\varphi\rangle - i\eps^3\int_0^\infty\int_\R a(x)|e^{it\Delta}\varphi(x)|^4\,dx\,dt +\mathcal{O}(\eps^4).
\end{align*}
It follows that knowledge of $S_a$ suffices to determine the functionals
\begin{equation}\label{functional}
\int_0^\infty \int_\R a(x)|e^{it\Delta}\varphi(x)|^4\,dx\,dt \qtq{for}\varphi\in\mathcal{S}. 
\end{equation}
The problem then reduces to showing that knowledge of the functionals \eqref{functional} uniquely determines the coefficient $a$. 

In the setting of Theorem~\ref{T2}, the 
overall structure of the argument is similar; however, the analysis becomes more complicated due to the fact that the form of the modified scattering map is different than that of the standard scattering map.  In particular, the modified scattering map is no longer easily viewed as a perturbation of the identity. Instead, in Proposition~\ref{P:Sa}, we show that for $\varphi\in\mathcal{S}$ and $0<\eps\ll 1$, we have the expansion
\begin{align*}
\langle S_a(\eps\varphi),\hat\varphi\rangle& = \eps\langle\hat\varphi,\hat\varphi\rangle + \tfrac{1}{2i}\log(1+\tfrac{1}{2\eps})\langle |S_a(\eps\varphi)|^2S_a(\eps\varphi),\hat\varphi\rangle + \eps^3 \mathcal{Q}_\eps[\varphi] \\
&\quad - i\eps^3\int_0^\infty \int_\R a(x)|e^{it\Delta}\varphi(x)|^4\,dx\,dt + \mathcal{O}(\eps^4), 
\end{align*}
where $\hat\varphi$ is the Fourier transform of $\varphi$ and $\mathcal{Q}_\eps$ is a multilinear expression in $\varphi$ (which, importantly, is independent of $a$).  Thus, despite the more complicated structure of $S_a$, we find that knowledge of $S_a$ still essentially determines the functionals appearing in \eqref{functional}, and the problem once again reduces to showing that the functionals \eqref{functional} determine the coefficient $a$.

In earlier works (e.g. \cite{Strauss, MorStr}), this final step is completed by evaluating the functional along a sequence of test functions concentrating at a point and utilizing the dominated convergence theorem in order to determine $a$ pointwise. In the present setting, the low-power nonlinearity poses an additional challenge; indeed, we cannot use dominated convergence directly, as we cannot guarantee that $e^{it\Delta}\varphi \in L_{t,x}^4(\R\times\R)$ even for $\varphi\in\mathcal{S}$.  Instead, inspired in part by \cite{KMV}, we proceed by specializing to the case of Gaussian data, for which the free evolution may be computed explicitly.  In this way, we find that knowledge of \eqref{functional} suffices to determine the convolution $a\ast K$ for an explicit kernel $K$, and the problem reduces to verifying directly that $\hat K\neq 0$ almost everywhere.  This final step is completed by evaluating a Gaussian integral. 

The rest of this paper is organized as follows:  In section~\ref{S:Notation}, we set up notation and collect some preliminary lemmas.  In Section~\ref{S:direct}, we establish modified scattering for \eqref{nls} (Theorem~\ref{T1}).  Finally, in Section~\ref{S:inverse}, we prove the main result, Theorem~\ref{T2}. 

\subsection*{Acknowledgements} J.M. was supported by NSF grant DMS-2137217 and a Simons Collaboration Grant.  G.C. would like to thank the Department of Mathematics and Statistics at Missouri S\&T, where part of this work was completed, for its hospitality.
\section{Notation and preliminary results}\label{S:Notation}
We write $A\lesssim B$ to denote $A\leq CB$ for some $C>0$.  We indicate dependence on parameters via subscripts, e.g. $A\lesssim_a B$ means $A\leq CB$ for some $C=C(a)>0$. 

We write $H^{k,\ell}$ to denote the weighted Sobolev space with norm
\[
\|u\|_{H^{k,\ell}} = \|\langle\partial_x\rangle^k \langle x\rangle^\ell u\|_{L^2}, 
\]
where $\langle\cdot\rangle$ is the Japanese bracket notation, i.e. $\langle x\rangle = \sqrt{1+x^2}$.  We write $\mathcal{S}$ for Schwartz space.

We denote the Fourier transform of a function $f:\R^d\to\C$ by
\[
\F_d f(\xi) = (2\pi)^{-\frac{d}2}\int_{\R^d} e^{-ix\cdot\xi}f(x)\,dx,
\]
with the inverse Fourier transform given by
\[
\F_d^{-1} f(x) = (2\pi)^{-\frac{d}2}\int_{\R^d} e^{ix\cdot\xi}f(\xi)\,d\xi. 
\]

If $d=1$, we will omit the subscript. We also write $\F f = \hat f$ and $\F^{-1} f = \check f$.  We caution the reader that factors of $2\pi$ will be uniformly omitted throughout the computations below. 

The Schr\"odinger group is given by the Fourier multiplier operator 
\[
e^{it\Delta} = \F^{-1} e^{-it\xi^2}\F.
\]
This operator admits the factorization identity
\[
e^{it\Delta} = M(t)D(t)\F M(t),
\]
where
\[
M(t) = e^{i\frac{x^2}{4t}} \qtq{and} [D(t)f](x) = (2it)^{-\frac12}f(\tfrac{x}{2t}).
\]

The Galilean operator $J(t)$ is defined via
\begin{equation}\label{Galilean}
J(t)=x+2it\partial_x = e^{it\Delta}x e^{-it\Delta}.  
\end{equation}

Given a solution $u$ to \eqref{nls}, we will perform much of the analysis on the associated profile $f(t)=e^{-it\Delta}u(t)$.  Suitable bounds on the profile imply estimates for the solution itself, as is seen in the following lemma. 
\begin{lemma}\label{L:PW} Let $f(t)=e^{-it\Delta}u(t)$.  Then for any $0<c<\tfrac14$, 
\[
\|u(t)\|_{L_x^\infty} \lesssim_c |t|^{-\frac12}\{ \|\hat f(t)\|_{L^\infty} + |t|^{-c}\|\hat f(t)\|_{H^1}\}. 
\]
\end{lemma}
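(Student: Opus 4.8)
The plan is to exploit the factorization $e^{it\Delta} = M(t)D(t)\F M(t)$ to relate the spatial sup-norm of $u(t)$ to the Fourier transform of the profile $\hat f(t)$, and then to control the error term produced by replacing $M(t)$ by $1$ using the weighted ($H^1$) bound on $\hat f(t)$. Concretely, from $u(t)=e^{it\Delta}f(t) = M(t)D(t)\F M(t) f(t)$ and the fact that $\F M(t) f(t)$ differs from $\F f(t) = \hat f(t)$ by the action of $M(t)$, I would write
\[
u(t) = M(t)D(t)\hat f(t) + M(t)D(t)\bigl[\F(M(t)-1)f(t)\bigr].
\]
Since $|M(t)|=1$ and $[D(t)g](x) = (2it)^{-1/2} g(x/2t)$ is an isometry up to the factor $|2t|^{-1/2}$ in $L^\infty$ (indeed $\|D(t)g\|_{L_x^\infty} = |2t|^{-1/2}\|g\|_{L^\infty}$), the first term is bounded by $|t|^{-1/2}\|\hat f(t)\|_{L^\infty}$ up to a constant, which gives the main term on the right-hand side.

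For the error term, the same reasoning reduces matters to estimating $\|\F(M(t)-1)f(t)\|_{L_x^\infty}$. Here I would use the elementary pointwise bound $|M(t)-1| = |e^{ix^2/4t}-1| \lesssim (|x|^2/|t|)^{c} \lesssim |t|^{-c}|x|^{2c}$ for any $0<c<\tfrac14$ (interpolating between the trivial bound $|M(t)-1|\le 2$ and $|M(t)-1|\le |x|^2/4|t|$), together with the Sobolev-type embedding $\|\F g\|_{L^\infty} \lesssim \|g\|_{L^1} \lesssim \|\langle x\rangle g\|_{L^2}$. Thus
\[
\|\F(M(t)-1)f(t)\|_{L_x^\infty} \lesssim \|(M(t)-1)f(t)\|_{L^1} \lesssim |t|^{-c}\,\|\,|x|^{2c}\langle x\rangle^{-1}\cdot \langle x\rangle f(t)\|_{L^1} \lesssim_c |t|^{-c}\|\langle x\rangle f(t)\|_{L^2},
\]
provided $2c < \tfrac12$, i.e. $c<\tfrac14$, so that $|x|^{2c}\langle x\rangle^{-1}\in L^2$ — this is exactly where the constraint $c<\tfrac14$ enters. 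Finally I would convert the weighted norm of $f(t)$ into the stated $H^1$ norm of $\hat f(t)$ via the Fourier-transform identity $\|\langle x\rangle f(t)\|_{L^2} = \|\langle \partial_\xi\rangle \hat f(t)\|_{L^2} \sim \|\hat f(t)\|_{H^1}$, which is Plancherel together with the fact that multiplication by $x$ corresponds to $i\partial_\xi$ on the Fourier side.

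The only genuinely delicate point is the bookkeeping of the $c<\tfrac14$ threshold: one must be careful that the weight $|x|^{2c}$ gained from $M(t)-1$ is integrable against $f(t)\in L^2$ only after one factor of $\langle x\rangle^{-1}$ is borrowed, and this is precisely what forces $2c<\tfrac12$. Everything else is a routine combination of the factorization identity, the $L^1\hookrightarrow$ $L^\infty$ mapping property of $\F$, and Plancherel; I would present these steps in the order above, isolating the pointwise estimate on $M(t)-1$ as the one computation worth stating explicitly.
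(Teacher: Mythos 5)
Your argument is correct and is essentially identical to the paper's proof: the same factorization $e^{it\Delta}=M(t)D(t)\F M(t)$, the same splitting into a main term $M(t)D(t)\hat f(t)$ and an error $M(t)D(t)\F[M(t)-1]f(t)$, the same interpolated pointwise bound $|M(t)-1|\lesssim |t|^{-c}|x|^{2c}$, and the same Hausdorff--Young plus Cauchy--Schwarz step (with $c<\tfrac14$ ensuring $|x|^{2c}\langle x\rangle^{-1}\in L^2$). The only difference is that you explicitly record the Plancherel identification $\|\langle x\rangle f\|_{L^2}\sim\|\hat f\|_{H^1}$, which the paper leaves implicit.
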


\begin{proof} We write
\begin{align*}
u(t) & = M(t)D(t)\F M(t)f(t) \\
& = M(t)D(t) \hat f(t) + M(t)D(t)\F[M(t)-1]f(t). 
\end{align*}
We now observe that
\[
\|M(t)D(t)\hat f(t)\|_{L^\infty} \lesssim |t|^{-\frac12}\|\hat f(t)\|_{L^\infty},
\]
which is acceptable. For the remaining term, we use Hausdorff--Young, the pointwise estimate
\[
|M(t)-1|\lesssim |x|^{2c}|t|^{-c},
\]
and Cauchy--Schwarz to obtain
\begin{align*}
\|M(t)D(t)\F[M(t)-1]f(t)\|_{L^\infty} & \lesssim |t|^{-\frac12-c}\| |x|^{2c} f\|_{L^1} \\
& \lesssim |t|^{-\frac12-c}\|\langle x\rangle f\|_{L^2}, 
\end{align*}
which is acceptable. 
\end{proof}

Next we introduce a smoothing estimate, which is the dual of the classical Kato smoothing estimate.  This estimate will be used to analyze the inhomogeneous cubic term. Such estimates appear in more general settings in \cite{ChenPusateri}.

\begin{lemma}\label{lem:smoothing} Let $\phi: \R \mapsto \C$ satisfy
\begin{equation}\label{phi-assumption}
|\phi(k)|\lesssim |k|^{\frac12}.
\end{equation}
Then for all $t\geq0$, we have
\begin{align}\label{eq:smoothing}
\left\Vert \int_0^t e^{-i\xi^{2}s} \phi(\xi) \hat{F}(s,\xi) \,ds\right\Vert _{L_{\xi}^{2}}\lesssim & \left\Vert F\right\Vert _{L_{x}^{1}L_{s}^{2}(\R\times[0,t])}.
\end{align}
\end{lemma}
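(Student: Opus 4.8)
The plan is to prove \eqref{eq:smoothing} by duality, recognizing it as (a consequence of) the dual form of the one-dimensional Kato local smoothing estimate $\||\partial_x|^{1/2}e^{it\Delta}g\|_{L^\infty_x L^2_t(\R\times\R)}\lesssim\|g\|_{L^2_x}$, and then establishing that dual estimate directly via an explicit change of variables and Plancherel's theorem in the time variable. Let $T$ denote the operator $F\mapsto \int_0^t e^{-i\xi^2 s}\phi(\xi)\hat F(s,\xi)\,ds$, and extend $F$ by zero outside $\R\times[0,t]$, so that $\|F\|_{L^1_x L^2_s(\R\times[0,t])} = \|F\|_{L^1_x L^2_s(\R\times\R)}$. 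Pairing $TF$ against $g\in L^2_\xi$ and applying Plancherel in $x$, one obtains
\[
\langle TF,g\rangle_{L^2_\xi} = \int_0^t\!\!\int_\R F(s,x)\,\overline{(T^*g)(s,x)}\,dx\,ds, \qquad (T^*g)(s,x)=\int_\R e^{ix\xi}\,\overline{\phi(\xi)}\,e^{i\xi^2 s}\,g(\xi)\,d\xi.
\]
Since $(L^1_x L^2_s)^\ast = L^\infty_x L^2_s$, the estimate \eqref{eq:smoothing} follows once we show that $\|T^*g\|_{L^\infty_x L^2_s(\R\times\R)}\lesssim\|g\|_{L^2_\xi}$.

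To prove this, I would fix $x\in\R$ and view $(T^*g)(\cdot,x)$ as a function of $s$ alone. Splitting the $\xi$-integral into $\{\xi>0\}$ and $\{\xi<0\}$ and substituting $\eta=\xi^2$ in each piece exhibits $(T^*g)(\cdot,x)$ as, up to a harmless constant, the inverse Fourier transform in $s$ of
\[
\eta\mapsto \mathbf{1}_{\{\eta>0\}}\,\frac{1}{2\sqrt{\eta}}\Big[e^{ix\sqrt{\eta}}\,\overline{\phi(\sqrt{\eta})}\,g(\sqrt{\eta})+e^{-ix\sqrt{\eta}}\,\overline{\phi(-\sqrt{\eta})}\,g(-\sqrt{\eta})\Big].
\]
By Plancherel in $s$ and the inequality $|A+B|^2\le 2|A|^2+2|B|^2$, the square of the $L^2_s$-norm of $(T^*g)(\cdot,x)$ is bounded by $\int_0^\infty\big(|\phi(\sqrt{\eta})|^2|g(\sqrt{\eta})|^2+|\phi(-\sqrt{\eta})|^2|g(-\sqrt{\eta})|^2\big)\,\tfrac{d\eta}{\eta}$; undoing the substitution recasts this as $\int_\R|\phi(\xi)|^2|g(\xi)|^2\,\tfrac{d\xi}{|\xi|}$, which is $\lesssim\|g\|_{L^2_\xi}^2$ by the hypothesis \eqref{phi-assumption}. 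Since $|e^{ix\xi}|\equiv1$, this bound is uniform in $x$, which completes the argument.

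The only delicate point is the substitution $\xi\mapsto\xi^2$: it is two-to-one (hence the need to split into positive and negative frequencies), and its Jacobian generates the singular weight $|\xi|^{-1}\,d\xi$ near the origin. This singularity is exactly what the vanishing assumption \eqref{phi-assumption} is designed to absorb, and it reflects the fact that Kato smoothing gains precisely half a derivative. Everything else is a routine combination of duality with Plancherel's theorem.
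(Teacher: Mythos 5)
Your argument is correct and follows essentially the same route as the paper: prove the dual Kato local smoothing estimate $\|T^*g\|_{L^\infty_x L^2_s}\lesssim\|g\|_{L^2}$ via the change of variables $\xi\mapsto\xi^2$, Plancherel in time, and absorption of the Jacobian singularity by the hypothesis $|\phi(\xi)|\lesssim|\xi|^{1/2}$, then conclude by $L^1_xL^2_s$--$L^\infty_xL^2_s$ duality. The only cosmetic difference is that you treat the positive and negative half-lines explicitly (applying $|A+B|^2\le 2|A|^2+2|B|^2$), whereas the paper invokes ``without loss of generality, $\xi>0$''; your version is a touch more careful but not substantively different.
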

\begin{proof}  We argue by duality.  We will first prove that
\begin{equation}\label{gc-dual}
\left\Vert  \int_\R  e^{ix\xi} \bar\phi(\xi) e^{i\xi^{2}s}h(\xi)\,d\xi\right\Vert _{L_{x}^{\infty}L_{s}^{2}(\R\times [0,t])}\lesssim \left\Vert h\right\Vert _{L^{2}}
\end{equation}
for any $h\in L^2$.  Without loss of generality, we restrict the integral to $\xi>0$. Changing variables via $\xi^{2}=\lambda$ and using Plancherel (in time) and \eqref{phi-assumption}, we obtain
\begin{align*}
\biggl\|&\int_0^\infty e^{ix\xi}\bar\phi(\xi)e^{i\xi^2 s}h(x)\,d\xi\biggr\|_{L_s^2([0,t])}^2 \\
& \lesssim \left\| \int_0^\infty e^{ix\sqrt{\lambda}}\bar\phi(\sqrt{\lambda})e^{is\lambda} h(\sqrt{\lambda})\tfrac{1}{\sqrt{\lambda}}\,d\lambda \right\|_{L^2_s(\R)}^2\\
& \lesssim \int_0^{\infty} \biggl|  \tfrac{\bar\phi(\sqrt{\lambda})}{\sqrt{\lambda}}h(\sqrt{\lambda}) \biggr|^{2}\,d\lambda \lesssim \|h\|_{L^2}^2,
\end{align*}
uniformly in $x$, which yields \eqref{gc-dual}. 

Now, given $h\in L^2$ and $F \in L_x^1 L_s^2$, we use \eqref{gc-dual} and H\"older to estimate
\begin{align*}
\biggl| & \int \int_0^t   h(\xi) e^{i\xi^{2}s} \bar\phi(\xi)  \bar{\hat{F}}(s,\xi)\,ds\,d\xi \biggr| \\
& = \left| \int_0^t \int_\R \Big( \int_\R e^{ix\xi}e^{i\xi^{2}s}\bar\phi(\xi) h(\xi)\,d\xi \Big) \bar{F}(s,x) \,dx\,ds\right| \\
& \lesssim \left\Vert \int_\R e^{ix\xi} \bar\phi(\xi)e^{i\xi^{2}s}h(\xi)\,d\xi\right\Vert _{L_{x}^{\infty} L_s^2(\R\times[0,t])} 	\big\| F \big\|_{L_{x}^{1}L_s^2(\R\times[0,t])}		\\
& \lesssim \left\Vert h\right\Vert _{L^{2}} \big\| F \big\|_{L_{x}^{1}L_s^2(\R\times[0,t])},	
\end{align*}
which implies the desired estimate.
\end{proof}
\section{The Direct Problem}\label{S:direct}

In this section we prove Theorem~\ref{T1}. The proof follows largely along standard lines (see e.g. \cite{KaPu}), with some modifications to handle the inhomogeneous cubic term. 

We let $u_0\in H^{1,1}$ with $\|u_0\|_{H^{1,1}}=\eps>0$, and let $u:[0,\infty)\times\R\to\C$ be the corresponding solution to \eqref{nls}. We define the profile $f(t)=e^{-it\Delta}u(t)$.  By standard well-posedness arguments and Sobolev embedding, one can derive that that
\begin{equation}\label{t01}
\sup_{t\in[0,1]}[\|u(t)\|_{H^1}+\|J(t)u(t)\|_{L^2}\bigr]\lesssim \eps. 
\end{equation}

Using \eqref{nls}, we have that
\[
i\partial_t \hat f(t,\xi) = \F e^{-it\Delta}(|u|^2 u)(\xi) + \F e^{-it\Delta}(a|u|^2 u)(\xi).
\]
In particular, we have the following straightforward estimates, which will be useful for $t\in[0,1]$: by Hausdorff--Young and Plancherel,
\begin{align*}
\|\partial_t \hat f\|_{L_\xi^\infty} \lesssim \|[1+a]|u|^2 u\|_{L_x^1} \lesssim \|u\|_{L_x^3}^3 \lesssim \|u\|_{H_x^1}^3 \lesssim \eps^3
\end{align*}
and
\begin{align*}
\|\partial_\xi \hat f\|_{L_\xi^2} & \lesssim \|J(t)([1+a]|u|^2 u)\|_{L^2} \\ 
& \lesssim \|u\|_{L^\infty}^2\|Ju\|_{L^2}+ \|u\|_{L^\infty}^3 \|t\nabla a\|_{L^2} \lesssim \eps^3.  
\end{align*}

Next, we isolate the component of $i\partial_t \hat f$ that fails to be integrable as $t\to\infty$. 
 Evaluating the Fourier transform and changing variables via $\xi-\sigma\mapsto \sigma$, we obtain
\begin{align*}
\F& e^{-it\Delta}\bigl(|u|^2 u\bigr)(\xi) \\
& = \iint e^{it[\xi^2-(\xi-\eta)^2+(\eta-\sigma)^2-\sigma^2)]}\hat f(t,\xi-\eta)\hat{\bar{f}}(t,\eta-\sigma)\hat f(t,\sigma)\,d\sigma\,d\eta \\
& = \iint e^{2it\eta\sigma}G_\xi[f(t),f(t),f(t)](\eta,\sigma)\,d\sigma\,d\eta,
\end{align*}
where
\begin{equation}\label{def:G}
G_\xi[f,g,h](\eta,\sigma):= \hat f(\xi-\eta) \hat{\bar{g}}(\eta-\xi+\sigma)\hat h(\xi-\sigma).
\end{equation}

We continue from above, using Plancherel and the identity
\[
\F_2[e^{2it\eta\sigma}] = \tfrac{1}{2t} e^{-i\frac{\eta\sigma}{2t}} 
\]
to obtain
\begin{align*}
\F e^{-it\Delta}\bigl(|u|^2 u\bigr)(\xi) & = \tfrac{1}{2t}\iint e^{-i\frac{\eta\sigma}{2s}} \F_2^{-1}\bigl\{G_\xi[f(t),f(t),f(t)]\bigr\}(\eta,\sigma)\,d\sigma\,d\eta. 
\end{align*}

Noting that $\hat{\bar{f}}(-\xi)=\bar{\hat{f}}(\xi)$, so that
\[
G_\xi[f(t),f(t),f(t)](0,0) = |\hat f(t,\xi)|^2\hat f(t,\xi),
\]
we therefore find that 
\begin{align*}
\F e^{-it\Delta}\bigl(|u|^2 u\bigr)(\xi)  & = \tfrac{1}{2t}|\hat f(t,\xi)|^2\hat f(t,\xi) \\ 
& \quad + \tfrac{1}{2t}\iint \bigl[ e^{-i\frac{\eta\sigma}{2t}}-1\bigr]\F_2^{-1}\{G_\xi[f(t),f(t),f(t)]\}(\eta,\sigma)\,d\sigma\,d\eta. 
\end{align*}

Combining the computations above, we derive that
\begin{align*}
i\partial_t f(t,\xi) & = \tfrac{1}{2t}|\hat f(t,\xi)|^2\hat f(t,\xi) \\
& \quad + \F e^{-it\Delta}\bigl(a|u|^2 u\bigr)(\xi) \\
& \quad + \tfrac{1}{2t} \iint \bigl[e^{-i\frac{\eta\sigma}{2t}}-1\bigr]\F_2^{-1}\{G_\xi[f(t),f(t),f(t)]\}(\eta,\sigma)\,d\sigma\,d\eta. 
\end{align*}

We now define
\begin{equation}\label{def:w}
w(t) = e^{iB(t)}\hat f(t),\qtq{where} B(t): = \exp\biggl\{i \int_0^t |\hat f(s)|^2\tfrac{ds}{2s+1}\biggr\}. 
\end{equation}
It follows that
\begin{align}
i\partial_t w(t,\xi) & = e^{iB(t,\xi)}\bigl\{i\partial_t f(t,\xi)-\tfrac{1}{2t+1}|\hat f(t,\xi)|^2 \hat f(t,\xi)\bigr\} \label{dtw1} \\
& = e^{iB(t,\xi)}\biggl[ \tfrac{1}{2t(2t+1)}|\hat f(t,\xi)|^2\hat f(t,\xi) \label{dtw21} \\
& \quad\quad\quad\quad + \F e^{-it\Delta}(a|u|^2 u)(\xi)\label{dtw22}\\
& \quad\quad\quad\quad +\tfrac{1}{2t}\iint\bigl[e^{-i\frac{\eta\sigma}{2t}}-1]\F_2^{-1}\{G_\xi[f(t),f(t),f(t)]\}(\eta,\sigma)\,d\sigma\,d\eta\biggr].\label{dtw23} 
\end{align}
Using \eqref{dtw1} and \eqref{t01}, we find that
\begin{equation}\label{dtwt01}
\|\partial_t w\|_{H^1} \lesssim \eps^3\qtq{uniformly for}t\in[0,1].
\end{equation}

We obtain estimates for $t\in [1,\infty)$ using a bootstrap argument. In particular, assuming that the solution satisfies estimates of the form
\begin{equation}\label{bootstrap}
\|\hat f(t)\|_{L_\xi^\infty} \leq 2C\eps\qtq{and} \|\hat f(t)\|_{H^1} \leq 2C\langle t\rangle^{\delta}\eps
\end{equation}
uniformly in $t\geq 1$, the estimates obtained below will demonstrate that the solution satisfies the improved bounds
\[
\|\hat f(t)\|_{L_\xi^\infty}\leq C\eps\qtq{and} \|\hat f(t)\|_{H^1} \leq C\langle t\rangle^{\delta}\eps.
\]
Here $\delta=\mathcal{O}(\eps^2)$ is a small parameter.  Observe that by Lemma~\ref{L:PW}, the assumptions \eqref{bootstrap} also guarantee that
\[
\|u(t)\|_{L^\infty} \lesssim \langle t\rangle^{-\frac12}\eps.
\]

Noting that $\|\hat f(t)\|_{L_\xi^\infty}\equiv \|w(t)\|_{L_\xi^\infty}$, we begin by using the expansion \eqref{dtw21}--\eqref{dtw23} to estimate $\partial_t w$ in in $L_\xi^\infty$. In particular, we will prove that if \eqref{bootstrap} holds, then 
\begin{equation}\label{dtw-bound}
\|\partial_t w\|_{L_\xi^\infty} \lesssim \langle t\rangle^{-1-\frac{1}{10}}\eps^3 \qtq{uniformly for}t\geq 1. 
\end{equation}

First, by \eqref{bootstrap} we immediately see that
\[
\tfrac{1}{2t(2t+1)}\| |\hat f|^2 \hat f\|_{L_\xi^\infty} \lesssim \langle t\rangle^{-2}\eps^3,
\]
which is acceptable. 

Next, using \eqref{bootstrap}, Hausdorff--Young, and Lemma~\ref{L:PW}, we estimate
\begin{align*}
\|\F e^{-it\Delta}(a|u|^2 u)\|_{L_\xi^\infty} & \lesssim \|a |u|^2 u\|_{L^1}  \lesssim \|a\|_{L^1}\|u\|_{L^\infty}^3  \lesssim_a \langle t\rangle^{-\frac32}\eps^3,
\end{align*}
which is acceptable. 

Finally, we turn to \eqref{dtw23}.  We begin by using the pointwise estimate
\[
|e^{ix}-1|\leq |x|^{\frac15} 
\]
to obtain
\begin{equation}\label{dtw23-temp}
\|\eqref{dtw23}\|_{L_\xi^\infty} \lesssim |t|^{-1-\frac15}\biggl\|\iint |\eta|^{\frac15}|\sigma|^{\frac15}|\F_2^{-1}\{G_\xi[f,f,f]\}(\eta,\sigma)|\,d\sigma\,d\eta\biggr\|_{L_\xi^\infty}.
\end{equation}
To estimate the right-hand side of \eqref{dtw23-temp}, we rely on the following general trilinear estimate.  We state the result in more generality than is needed here, as this formulation will be useful in the next section.

\begin{lemma}[Trilinear Estimate]\label{L:trilinear} Define $G_\xi(\cdot,\cdot,\cdot)$ as in \eqref{def:G}. Then
\[
\iint |\eta|^{\frac15}|\sigma|^{\frac15} |\F_2^{-1}\{G_\xi[f,g,h]\}(\eta,\sigma)|\,d\sigma\,d\eta \lesssim \|f\|_{H^{0,1}}\| g\|_{H^{0,1}}\| h\|_{H^{0,1}}
\]
uniformly in $\xi$.
\end{lemma}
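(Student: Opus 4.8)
\emph{Proof proposal.} The plan is to compute the two–dimensional inverse Fourier transform $\F_2^{-1}\{G_\xi[f,g,h]\}$ \emph{explicitly}, which collapses the left–hand side to a single convolution–type integral that can be estimated by hand. Starting from the definition \eqref{def:G} and substituting $\alpha=\xi-\eta$, $\gamma=\xi-\sigma$ (so that the argument $\eta-\xi+\sigma$ of $\hat{\bar g}$ becomes $\xi-\alpha-\gamma$), then writing $\hat{\bar g}(\xi-\alpha-\gamma)$ in terms of $\bar g$ by Fourier inversion and carrying out the $\alpha$- and $\gamma$-integrals, one checks (omitting factors of $2\pi$ as elsewhere) that
\[
\F_2^{-1}\{G_\xi[f,g,h]\}(p,q) = e^{i(p+q)\xi}\int_\R e^{-iy\xi}\,\bar g(y)\,f(y-p)\,h(y-q)\,dy.
\]
Since the phase factors are unimodular, this immediately gives
\[
\bigl|\F_2^{-1}\{G_\xi[f,g,h]\}(p,q)\bigr| \le \int_\R |g(y)|\,|f(y-p)|\,|h(y-q)|\,dy
\]
uniformly in $\xi$, so the uniformity claimed in the lemma is built in from the outset.

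Next I would insert this pointwise bound, use Tonelli's theorem to pull the $y$–integral outside, and observe that the remaining $p$– and $q$–integrals factor:
\[
\iint_{\R^2}|p|^{\frac15}|q|^{\frac15}\bigl|\F_2^{-1}\{G_\xi[f,g,h]\}(p,q)\bigr|\,dp\,dq \le \int_\R |g(y)|\Bigl(\int_\R|p|^{\frac15}|f(y-p)|\,dp\Bigr)\Bigl(\int_\R|q|^{\frac15}|h(y-q)|\,dq\Bigr)dy.
\]
For the inner integrals, the change of variables $p\mapsto y-p$, the elementary bound $|y-p|^{\frac15}\lesssim\langle y\rangle^{\frac15}(1+|p|^{\frac15})$, and Cauchy–Schwarz yield
\[
\int_\R|p|^{\frac15}|f(y-p)|\,dp \lesssim \langle y\rangle^{\frac15}\int_\R(1+|p|^{\frac15})|f(p)|\,dp \lesssim \langle y\rangle^{\frac15}\|f\|_{H^{0,1}},
\]
since the weight $(1+|p|^{\frac15})\langle p\rangle^{-1}$ lies in $L^2(\R)$ (it behaves like $|p|^{-4/5}$ at infinity), and likewise with $h$ in place of $f$.

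Combining these bounds leaves $\int_\R|g(y)|\langle y\rangle^{\frac25}\,dy$, which by one further application of Cauchy–Schwarz is $\lesssim\|g\|_{H^{0,1}}$ because $\langle y\rangle^{\frac25-1}=\langle y\rangle^{-3/5}\in L^2(\R)$; this is exactly where the exponent $\tfrac15<\tfrac14$ is used. Putting everything together gives the desired estimate. The only genuinely delicate point is the first step: deriving the explicit formula for $\F_2^{-1}\{G_\xi\}$ requires careful bookkeeping of the Fourier conventions, the parity operators arising from applying $\F$ twice, and the $\xi$–shift hidden inside the argument of $\hat{\bar g}$. Once that formula is in hand, the rest is routine and, crucially, manifestly uniform in $\xi$.
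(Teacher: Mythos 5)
Your proposal is correct and follows essentially the same route as the paper: derive the explicit kernel identity $\F_2^{-1}\{G_\xi[f,g,h]\}(\eta,\sigma)=\int e^{i\xi[\eta+\sigma-z]}f(z-\eta)\bar g(z)h(z-\sigma)\,dz$, take absolute values to kill the $\xi$-dependence, split the weights $|\eta|^{1/5},|\sigma|^{1/5}$ using subadditivity/triangle inequality, and close with Cauchy--Schwarz in the form $\||x|^c f\|_{L^1}\lesssim\|\langle x\rangle f\|_{L^2}$ for $c<\tfrac12$. The only difference is cosmetic bookkeeping in how the weight splitting is organized at the last step.
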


\begin{proof} Recall that
\[
G_\xi[f,g,h](x,y) = \hat f(\xi-x)\hat{\bar g}(x-\xi+y)\hat h(\xi-y).
\]
Thus, writing $\int e^{iab}\,db = \delta_{a=0}$, we have
\begin{align}
\F_2^{-1}&\{G_\xi[f,g,h]\}(\eta,\sigma) \nonumber\\
& = \idotsint e^{i[x\eta+y\sigma-v(\xi-x)-z(x-\xi+y)-r(\xi-y)]}f(v)\bar g(z)h(r)\,dx\,dy\,dr\,dv\,dz\nonumber \\
& = \iiint \bar g(z)e^{iz\xi} f(v)e^{-iv\xi}e^{i[x(v+\eta-z)]}\biggl[\int h(r)e^{-ir\xi}\int e^{i[y(r+\sigma-z)]}\,dy\,dr\biggr]\,dx\,dv\,dz\nonumber \\
& = \int \bar g(z) h(z-\sigma) e^{i\xi\sigma}\biggl[\int f(v)e^{-iv\xi}\int e^{i[x(v+\eta-z)]}\,dx\,dv\biggr]\,dz\nonumber \\
& = \int f(z-\eta)\bar g(z) h(z-\sigma)  e^{i\xi[\eta+\sigma-z]}\,dz. \label{F2-id}
\end{align}
It follows that
\[
|\F_2^{-1}\{G_\xi[f,g,h]\}(\eta,\sigma)| \leq \int |f(z-\eta)h(z-\sigma)g(z)|\,dz 
\]
uniformly in $\xi$, and hence
\begin{align*}
\iint& |\eta|^{\frac15}|\sigma|^{\frac15}|\F_2^{-1}G_\xi[f,g,h]\}(\eta,\sigma)|\,d\sigma\,d\eta \\
& \lesssim \iiint |\eta|^{\frac15}|\sigma|^{\frac15} |f(z-\eta)h(z-\sigma)g(z)|\,dz\,d\sigma\,d\eta \\
& \lesssim \iiint [|z-\eta|^{\frac15}+|z|^{\frac15}][|z-\sigma|^{\frac15}+|z|^{\frac15}]|f(z-\eta)h(z-\sigma)g(z)|\,dz\,d\sigma\,d\eta
\end{align*}
uniformly in $\xi$. The result now follows from the fact that for any $0<c<\tfrac12$,
\[
\| |x|^{c} f\|_{L^1} \lesssim \|\langle x\rangle f\|_{L^2},
\]
which is a consequence of Cauchy--Schwarz.
\end{proof}

Continuing from \eqref{dtw23-temp} and applying Lemma~\ref{L:trilinear} and \eqref{bootstrap}, we obtain
\begin{align*}
\|\eqref{dtw23}\|_{L_\xi^\infty} \lesssim |t|^{-1-\frac15}\|f(t)\|_{H^{0,1}}^3 \lesssim |t|^{-1-\frac15+3\delta}\eps^3,
\end{align*}
which is acceptable (provided $\delta$ is sufficiently small).  This completes the proof of \eqref{dtw-bound}, which suffices to close the bootstrap estimate for $\hat f$ in $L^\infty$.

To complete the proof of \eqref{bootstrap}, it suffices to close the bootstrap estimate for $H^1$-norm of $\hat f$.  Without loss of generality, we estimate the $\dot H^1$-norm only.  

Using the Duhamel formula, we first write
\begin{align}
\partial_\xi \hat f(t,\xi) &  = \partial_\xi \hat u_0(\xi) \label{dxi-data} \\
& \quad - i\int_0^t \partial_\xi\bigl[\F e^{-is\Delta}\bigl(|u|^2 u\bigr)(\xi)\bigr]\,ds \label{dxi-homogeneous}\\
& \quad -i \int_0^t \partial_\xi\bigl[ \F e^{-is\Delta}\bigl(a|u|^2 u\bigr)(\xi)\bigr]\,ds. \label{dxi-inhomogeneous}
\end{align}
The term in \eqref{dxi-data} is $\mathcal{O}(\eps)$ in $L_\xi^2$, which is acceptable.

Using the same computations as above, we may write
\begin{equation}\label{dxi-homogeneous2}
\eqref{dxi-homogeneous} = -i\int_0^t\iint e^{2is\eta\sigma}\partial_\xi G_\xi[f(s),f(s),f(s)](\eta,\sigma)\,d\sigma\,d\eta\,ds.
\end{equation}
Recalling the definition of $G_\xi$ (see \eqref{def:G}), it follows from the product rule that $\partial_\xi G_\xi[f,f,f]$ is a linear combination of terms of the form $G_\xi[xf,f,f]$.  After distributing the derivative, we can use the identity
\[
x f(s) = xe^{-is\Delta} u(s) = e^{-is\Delta} J(s)u(s)
\]
(cf. \eqref{Galilean}) and undo the computations that led to \eqref{dxi-homogeneous2} to see that \eqref{dxi-homogeneous} may be written as a sum of terms of the form
\[
\int_0^t \F[e^{-is\Delta} \mathcal{O}(u^2) Ju](\xi)\,ds. 
\]
In particular, by \eqref{t01} and \eqref{bootstrap}, we may estimate
\begin{align*}
\|\eqref{dxi-homogeneous}\|_{L_\xi^2} & \lesssim \int_0^t \|u(s)\|_{L_\xi^\infty}^2 \|J(s)u(s)\|_{L^2}\,ds  \lesssim \int_0^t \langle s\rangle^{-1+\delta}\eps^3\,ds \lesssim \langle t\rangle^\delta \eps^3,
\end{align*}
which is acceptable.


It remains to estimate \eqref{dxi-inhomogeneous}.  We begin by writing
\begin{align}
\partial_\xi \int_0^t  \F e^{-is\Delta}\bigl(a|u|^2 u\bigr)(\xi) \,ds&=\partial_\xi \int_0^t e^{is\xi^2} \F \bigl(a|u|^2 u\bigr)(\xi)\,ds \nonumber \\
&=\int_0^t e^{is\xi^2} \partial_\xi\F \bigl(a|u|^2 u\bigr)(\xi)\,ds \label{dxi-inhomogeneous2}\\
&\quad +2i\int_0^t \xi s e^{is\xi^2} \F \bigl(a|u|^2 u\bigr)(\xi)\,ds.\label{dxi-inhomogeneous3}
\end{align}
Using \eqref{t01} and \eqref{bootstrap}, we first estimate
\begin{align*}
\|\eqref{dxi-inhomogeneous2}\|_{L_\xi^2} \lesssim \int_0^t \|x\,a|u|^2 u\|_{L_x^2}\,ds \lesssim \int_0^t\|xa\|_{L^2} \|u\|_{L^\infty}^3\,ds \lesssim \int_0^t \eps^3 \langle s\rangle^{-\frac32}\,ds \lesssim\eps^3,
\end{align*}
which is acceptable. 


Next, we let $\varphi$ be a smooth cutoff to $|\xi|\leq 1$ and decompose
\begin{align}
\eqref{dxi-inhomogeneous3}&=2i\int_0^t \xi \varphi(\xi) s e^{is\xi^2} \F \bigl(a|u|^2 u\bigr)(\xi)\,ds\label{dxi-inhomogeneous4}\\
&\quad +2i\int_0^t  [1-\varphi(\xi)]s e^{is\xi^2} \xi \F \bigl(a|u|^2 u\bigr)(\xi)\,ds.\label{dxi-inhomogeneous5}
\end{align}

Applying Lemma~\ref{lem:smoothing} (with $\phi(\xi)=\xi\varphi(\xi)$ and $\hat{F}(s,\xi)=s \F(a|u|^2u)$), \eqref{t01}, \eqref{bootstrap}, and Minkowski's integral inequality, we deduce that
\begin{align*}
\|\eqref{dxi-inhomogeneous4}\|_{L_\xi^2}&\lesssim \bigl\| s a|u|^2 u\bigr\|_{L^1_x L^2_s(\R\times[0,t])}\\
& \lesssim \|a\|_{L^1} \|s|u|^2 u\|_{L_x^\infty L_s^2(\R\times[0,t])} \\
& \lesssim \| s|u|^2 u\|_{L_s^2 L_x^\infty([0,t]\times\R)} \\
& \lesssim \eps^3\| s\langle s\rangle^{-\frac32}\|_{L_s^2([0,t])} \lesssim \eps^3\langle\log\langle t\rangle\rangle, 
\end{align*}
which is acceptable.  

Similarly, applying Lemma~\ref{lem:smoothing} (with $\phi(\xi)=1-\varphi(\xi)$ and $\hat F(s,\xi)=s\xi \F(a|u|^2 u)$), we find that
\begin{align}
\|\eqref{dxi-inhomogeneous5}\|_{L_\xi^2} &\lesssim \bigl\| s |u|^2 u\, \partial_x a\bigr\|_{L^1_x L^2_s(\R\times[0,t])}+\bigl\| s a|u|^2 \partial_x u\bigr\|_{L^1_x L^2_s(\R\times[0,t])}.
\end{align}
For the first term, we proceed as we did for \eqref{dxi-inhomogeneous4}.  This yields
\[
\|s|u|^2 u\partial_x a\|_{L_x^1 L_s^2(\R\times[0,t])} \lesssim \eps^3\|\partial_x a\|_{L^1} \langle\log\langle t\rangle\rangle,
\]
which is acceptable. 

For the second term, we write
\[
s\partial_x u = \tfrac{1}{2i}[J(s)u(s)-xu(s)]
\]
Then, using \eqref{bootstrap} (noting that $\|Ju\|_{L^2} = \|\hat f\|_{\dot H^1}$ by \eqref{Galilean}), we estimate
\begin{align*}
\|s&a|u|^2 \partial_x u\|_{L_x^1 L_s^2(\R\times[0,t])} \\
& \lesssim \|\langle x\rangle a\|_{L^2} \||u|^2 \langle x\rangle^{-1}[Ju-xu]\|_{L_{s,x}^2([0,t]\times\R)} \\
& \lesssim \|u\|_{L_s^4 L_x^\infty([0,t]\times\R)}^2 \bigl\{ \|Ju\|_{L_s^\infty L_x^2([0,t]\times\R)}+\|\tfrac{x}{\langle x\rangle} u\|_{L_s^\infty L_x^2([0,t]\times\R)}\bigr\} \\
& \lesssim \eps^3\langle t\rangle^\delta,
\end{align*}
which is acceptable.


Combining the estimates above, we can close the bootstrap for the $H^1$-component of $\hat f$.  Thus the desired bounds for $\hat f$ hold for all $t\geq 0$, and in particular we obtain the bound \eqref{dtw-bound}.

With \eqref{dtw-bound} in hand, we obtain the establish the existence of $w_+$ in $L_\xi^\infty$ such that
\begin{equation}\label{quant-converge}
\|w(t)-w_+\|_{L_\xi^\infty} \lesssim \langle t\rangle^{-\frac{1}{10}}\eps^3
\end{equation}
uniformly for $t\geq 0$, which suffices to complete the proof of Theorem~\ref{T1}.

\section{The Inverse Problem}\label{S:inverse}

The goal of this section is to prove Theorem~\ref{T2}.  Our first step is a careful analysis of the scattering map $u_0\mapsto S_a(u_0)$ for a fixed admissible inhomogeneity $a$.  

\begin{proposition}[Structure of $S_a$]\label{P:Sa} Let $a$ be admissible in the sense of Definition~\ref{D:admissible}. Let $\varphi\in\mathcal{S}(\R)$ and $\eps>0$ be sufficiently small.  Let $u:[0,\infty)\times\R\to\C$ be the solution to \eqref{nls} with $u|_{t=0}=\eps\varphi$. Then
\begin{equation}\label{Sa-structure}
\begin{aligned}
\langle S_a(\eps\varphi),\hat\varphi\rangle & = \eps\langle \hat{\varphi},\hat\varphi\rangle + \tfrac1{2i}\log(1+\tfrac{1}{2\eps})\langle |S_a(\eps\varphi)|^2S_a(\eps\varphi),\hat\varphi\rangle + \eps^3\mathcal{Q}_\eps[\varphi] \\
& \quad -i \eps^3\int_0^\infty\int_\R a(x)|e^{it\Delta}\varphi(x)|^4\,dx\,dt + \mathcal{O}(\eps^4),
\end{aligned}
\end{equation}
where
\begin{equation}\label{Q_eps}
\mathcal{Q}_\eps[\varphi]:=\int_\eps^\infty \tfrac{1}{2it}\iiint[e^{-i\frac{\eta\sigma}{2t}}-1]\varphi(z-\eta)\varphi(z-\sigma)\bar\varphi(z)\bar\varphi(z-\eta-\sigma)\,dz\,d\eta\,d\sigma\,dt. 
\end{equation}
\end{proposition}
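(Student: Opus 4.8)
The plan is to start from the implicit Duhamel-type formula for $S_a$ expressed in terms of the profile $w$, and then carry out a two-stage Born-type expansion: first replacing $w(t)$ by its initial value $\widehat{\eps\varphi}$ in the various nonlinear terms, and then separately tracking the three structurally distinct contributions (the $L^2$ term, the phase-correction/log term, and the genuinely nonlinear remainder $\mathcal Q_\eps$ plus the inhomogeneous $a$-term). Concretely, I would integrate the identity $i\partial_t w = \eqref{dtw21}+\eqref{dtw22}+\eqref{dtw23}$ from $t=0$ to $t=\infty$; by \eqref{quant-converge} the left side gives $w_+ - w(0) = S_a(\eps\varphi) - e^{-iB(0)}\widehat{\eps\varphi}$, and since $B(0)=0$ we get $S_a(\eps\varphi) = \eps\hat\varphi - i\int_0^\infty[\eqref{dtw21}+\eqref{dtw22}+\eqref{dtw23}]\,dt$. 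Pairing against $\hat\varphi$ then produces four terms to analyze.

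The next step is to replace $w$ (equivalently $\hat f$, equivalently $\F e^{-it\Delta}u$) by the free profile. For $t\in[0,1]$ the bounds \eqref{t01}, \eqref{dtwt01} show $\|w(t)-\eps\hat\varphi\|_{H^1}\lesssim \eps^3$, and for $t\geq 1$ the bound \eqref{dtw-bound} combined with \eqref{quant-converge} shows $w(t)$ stays within $O(\eps^3)$ of $w_+ = S_a(\eps\varphi)$ in $L^\infty$; thus in every cubic/quartic expression the error from such a replacement is $O(\eps^4)$. For the term \eqref{dtw22}, $\F e^{-it\Delta}(a|u|^2u)$: replacing $u(t)$ by $e^{it\Delta}(\eps\varphi)$ and pairing with $\hat\varphi$ (undoing the Fourier transform via Plancherel, as in $\langle \F e^{-it\Delta}g,\hat\varphi\rangle = \langle e^{-it\Delta}g,\varphi\rangle = \langle g, e^{it\Delta}\varphi\rangle$) yields exactly $-i\eps^3\int_0^\infty\int_\R a(x)|e^{it\Delta}\varphi(x)|^4\,dx\,dt + O(\eps^4)$; the time integral converges absolutely because $a\in L^1$ and $\|e^{it\Delta}\varphi\|_{L^\infty_x}\lesssim\langle t\rangle^{-1/2}$ by Lemma~\ref{L:PW}, so $\int \|a\|_{L^1}\|e^{it\Delta}\varphi\|_\infty^3\,dt<\infty$ after an easy $t\in[0,1]$ fix. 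For the term \eqref{dtw23}, one uses the identity \eqref{F2-id} to rewrite $\F_2^{-1}\{G_\xi[f,f,f]\}$, then substitutes the free profile and integrates against $\hat\varphi$; the $\xi$-integration collapses a phase factor and produces, up to the $O(\eps^4)$ replacement error and a harmless splitting of the $t$-integral, precisely $\eps^3\mathcal Q_\eps[\varphi]$ as written in \eqref{Q_eps} — note the lower limit $\eps$ rather than $0$, which I expect is an artifact of how one normalizes/splits the short-time piece (the $[0,\eps]$ contribution being $O(\eps^4)$ since the integrand of \eqref{dtw23} is $O(t^{-1+\text{small}}\eps^3)$... actually one must be slightly careful here, see below). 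Finally, the term \eqref{dtw21}, $\tfrac{1}{2t(2t+1)}|\hat f|^2\hat f$: here one recognizes $\int_0^\infty \frac{1}{2t(2t+1)}\,dt$ diverges at $t=0$, which is exactly why the comparison is not with $0$; the correct bookkeeping is that $\eqref{dtw21}$ should be combined with the definition of $B$, and the net effect is the $\tfrac{1}{2i}\log(1+\tfrac1{2\eps})\langle|S_a(\eps\varphi)|^2 S_a(\eps\varphi),\hat\varphi\rangle$ term: indeed $\int_0^\infty \frac{ds}{2s+1} - \int_0^\infty\frac{ds}{2s}$ is not literally what appears, rather one isolates $\int \frac{1}{2t}-\frac{1}{2t+1} = \frac{1}{2t(2t+1)}$ and resums. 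I would unwind this by writing $i\partial_t w = e^{iB}[\,i\partial_t f - \tfrac{1}{2t+1}|\hat f|^2\hat f\,]$ from \eqref{dtw1}, so that the $\tfrac{1}{2t+1}$-factor is the one carried by the integrating factor; integrating $\tfrac{1}{2t+1}|\hat f|^2\hat f$ and freezing $\hat f \approx S_a(\eps\varphi)$ (valid up to $O(\eps^4)$ for $t\gtrsim\eps^{-1}$, with the $t\lesssim \eps^{-1}$ range contributing the logarithm's argument cutoff) gives the stated log term with constant $\log(1+\tfrac{1}{2\eps})$ after evaluating $\int_0^\infty\frac{dt}{2t+1}$ against the matching lower cutoff; the precise emergence of $1+\tfrac1{2\eps}$ comes from a change of variables $t\mapsto \eps t$ or from the $2s+1$ in \eqref{def:w}.

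The main obstacle is the careful accounting in this last step — disentangling which part of the non-integrable cubic term $\tfrac1{2t}|\hat f|^2\hat f$ has been absorbed into the phase $B(t)$, which part survives as the explicit log term, and showing all the mismatches are genuinely $O(\eps^4)$. In particular one must justify that $\hat f(t)$ may be replaced by $w_+ = S_a(\eps\varphi)$ inside the log term with an error no worse than $\eps^4$, which requires the quantitative rate $\langle t\rangle^{-1/10}\eps^3$ from \eqref{quant-converge} together with $\int_{\eps^{-1}}^\infty \langle t\rangle^{-1/10}\frac{dt}{2t+1}$ — but that integral diverges, so the replacement must instead be localized to a bounded range of $t$ (say $t\lesssim \eps^{-100}$) and the tail handled by a separate argument exploiting that $|\hat f(t)|$ itself, not just its distance to $w_+$, contributes; this is the delicate point and mirrors the standard subtlety in modified-scattering asymptotic expansions. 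The remaining estimates (the $O(\eps^4)$ bound on all substitution errors in the genuinely cubic terms, absolute convergence of the $a$-integral, and the $t\in[0,1]$ short-time contributions) follow routinely from Lemma~\ref{L:PW}, Lemma~\ref{L:trilinear}, \eqref{t01}, and \eqref{bootstrap}, together with $a\in L^1\cap L^\infty$.
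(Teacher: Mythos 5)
Your overall strategy matches the paper's: integrate the evolution equation for $w$, split the time integral at $t=\eps$, replace $f(t)$ by its initial data $u_0=\eps\varphi$ (respectively $w(t)$ by $w_+$) in the cubic terms, and pair against $\hat\varphi$. The paper's bookkeeping is as in your sketch: the $[0,\eps]$ piece is controlled as a single chunk via $\|\partial_t w\|_{H^1}\lesssim\eps^3$ from \eqref{dtwt01}, the phase $e^{iB(t)}$ is removed from \eqref{dtw22}--\eqref{dtw23} using $\|e^{iB}-1\|_\infty\lesssim\eps^2\langle\log\langle t\rangle\rangle$, the $a$-term gives the double integral after Plancherel and the replacement $u\to e^{it\Delta}u_0$, and the resonant part of the homogeneous cubic term gives $\mathcal Q_\eps$ via the identity \eqref{F2-id}.

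However, the one place you flag as ``the delicate point'' is a false alarm stemming from a miscalculation. The surviving cubic term after the integrating factor is not $\tfrac{1}{2t+1}|\hat f|^2\hat f$ but rather $\tfrac{1}{2t(2t+1)}|\hat f|^2\hat f$ (this is \eqref{dtw21}, coming from the algebraic identity $\tfrac{1}{2t}-\tfrac{1}{2t+1}=\tfrac{1}{2t(2t+1)}$), and the logarithm arises directly and cleanly from the exact evaluation $\int_\eps^\infty\tfrac{dt}{2t(2t+1)}=\tfrac12\log(1+\tfrac{1}{2\eps})$. Replacing $w(t)$ by $w_+$ in this integral costs $\eps^5\int_\eps^\infty\tfrac{\langle t\rangle^{-1/10}}{2t(2t+1)}\,dt\lesssim\eps^5|\log\eps|=\mathcal O(\eps^4)$, with the integral converging comfortably at $\infty$ (the integrand decays like $t^{-2-1/10}$). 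Even the integral you actually wrote down, $\int^\infty\langle t\rangle^{-1/10}\tfrac{dt}{2t+1}\sim\int^\infty t^{-11/10}\,dt$, converges since the exponent exceeds $1$. So no localization to $t\lesssim\eps^{-100}$ nor any ``separate tail argument'' is needed; the direct replacement closes. You should also be careful not to estimate \eqref{dtw23} on $[0,\eps]$ term-by-term (as you tentatively suggest): the decomposition \eqref{dtw21}--\eqref{dtw23} reintroduces an apparent $1/t$ singularity at $t=0$ which cancels, which is exactly why the paper retains $\int_0^\eps\partial_t w\,dt$ as a single block and estimates it via \eqref{dtw1}.
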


\begin{proof} We write $u_0=\eps\varphi$ and let $u$ be the solution to \eqref{nls} with $u|_{t=0}=u_0$.  We define the profile $f(t)=e^{-it\Delta}u(t)$ and the modified profile $w(t) = e^{iB(t)}\hat f(t)$ as in \eqref{def:w}.  In particular, there exists $w_+\in L_\xi^\infty$ such that $w(t)\to w_+=S_a(u_0)$ in $L_\xi^\infty$ as $t\to\infty$. By construction, we have
\[
\|w_+\|_{L_\xi^\infty} \lesssim \eps.
\]

We begin by using \eqref{dtw21}--\eqref{dtw23} from the preceding section to write
\begin{align}
iw_+(\xi) & = i\hat u_0(\xi) + \int_0^\eps i\partial_t w(t,\xi)\,dt + \int_\eps^\infty \tfrac{1}{2t(2t+1)}|w(t,\xi)|^2 w(t,\xi) \,dt \label{w-expand1} \\
&\quad  + \int_\eps^\infty e^{iB(t,\xi)}\mathcal{G}_t[f(t),f(t),f(t)](\xi)\,dt \label{w-expand2} \\
& \quad + \int_\eps^\infty e^{iB(t,\xi)} \F[e^{-it\Delta}\{a|u(t)|^2 u(t)\}](\xi)\,dt,\label{w-expand3}
\end{align}
where
\begin{equation}\label{def:calG}
\mathcal{G}_t[f,g,h](\xi):= \tfrac{1}{2t}\iint [e^{-i\frac{\eta\sigma}{2t}}-1]\F_2^{-1}\{G_\xi[f,g,h]\}(\eta,\sigma)\,d\eta\,d\sigma,
\end{equation}
with $G_\xi(\cdot,\cdot,\cdot)$ as in \eqref{def:G}.

The term $\hat u_0(\xi)$ is $\mathcal{O}(\eps)$.   The analysis now proceeds by separating the remaining components in \eqref{w-expand1}--\eqref{w-expand2} that are $\mathcal{O}(\eps^3)$ in $L_\xi^\infty$ from those that are $o(\eps^3)$ as $\eps\to 0$. 

We first observe that by \eqref{dtwt01}, we have that
\[
\biggl\|\int_0^\eps \partial_t w\,dt\biggr\|_{L_\xi^\infty} \lesssim \eps^4.
\]

For the remaining term in \eqref{w-expand1}, we claim that
\begin{equation}\label{w-expand13}
\int_\eps^\infty \tfrac{1}{2t(2t+1)}|w(t,\xi)|^2w(t,\xi)\,dt = \tfrac12\log(1+\tfrac{1}{2\eps})|w_+(\xi)|^2 w_+(\xi) + \mathcal{O}(\eps^4)
\end{equation}
in $L_\xi^\infty$.  To see this, we use  \eqref{quant-converge} to estimate
\begin{align*}
\||w(t)|^2 w(t)- |w_+|^2 w_+ \|_{L_\xi^\infty} & \lesssim \{\|w(t)\|_{L_\xi^\infty}^2 + \|w_+\|_{L_\xi^\infty}^2\} \|w(t)-w_+\|_{L_\xi^\infty} \\
& \lesssim \eps^5\langle t\rangle^{-\frac1{10}},
\end{align*}
which yields
\begin{align*}
\biggl\| \int_\eps^\infty& \tfrac{1}{2t(2t+1)}\bigl[|w(t)|^2 w(t) - |w_+|^2 w_+\bigr]\,dt\biggr\|_{L_\xi^\infty} \\
& \lesssim \eps^5\int_\eps^\infty \tfrac{1}{2t(2t+1)}\langle t\rangle^{-\frac{1}{10}}\,dt \lesssim \eps^5|\log \eps| = \mathcal{O}(\eps^4). 
\end{align*}
As $\int_\eps^\infty \frac{1}{2t(2t+1)}\,dt = \tfrac12\log(1+\frac{1}{2\eps})$, we conclude that \eqref{w-expand13} holds.

Collecting the estimates so far, we have found
\begin{equation}\label{w-expand1-new}
\eqref{w-expand1} = i\hat u_0(\xi) + \tfrac12\log(1+\tfrac{1}{2\eps})|w_+(\xi)|^2 w_+(\xi)+\mathcal{O}(\eps^4). 
\end{equation}

We turn to the terms in \eqref{w-expand2}--\eqref{w-expand3}.  We first show that the phase $\exp\{iB(t)\}$ can be removed up to errors that are higher order in $\eps$ (at the price of logarithmic time growth).  In particular, we have
\begin{equation}\label{L:phase}
\|e^{iB(t)}-1\|_{L_\xi^\infty} \lesssim \|B(t)\|_{L_\xi^\infty} \lesssim \int_0^t \|\hat f(s)\|_{L_\xi^\infty}^2 \tfrac{ds}{2s+1} \lesssim \eps^2\langle\log \langle t\rangle\rangle.
\end{equation}

We now use \eqref{L:phase} to show that
\begin{equation}\label{w-expand2-temp}
\begin{aligned}
\eqref{w-expand2}+\eqref{w-expand3}& = \int_\eps^\infty \mathcal{G}_t[f(t),f(t),f(t)](\xi)\,dt\\
&\quad + \int_\eps^\infty\F[e^{-it\Delta}\{a|u(t)|^2 u(t)\}](\xi)\,dt + \mathcal{O}(\eps^4)
\end{aligned}
\end{equation}
uniformly in $\xi$.  To this end, we will verify the following two estimates:
\begin{align}
&\int_\eps^\infty \langle\log \langle t\rangle\rangle\|\mathcal{G}_t[f(t),f(t),f(t)]\|_{L_\xi^\infty}\,dt \lesssim \eps^{\frac{14}{5}}, \label{we2t1} \\
&\int_\eps^\infty \langle\log\langle t\rangle\rangle\|\F[e^{-it\Delta}\{a|u(t)|^2 u(t)\}]\|_{L_\xi^\infty}\,dt\lesssim \eps^3. \label{we2t2}
\end{align}

Using Lemma~\ref{L:trilinear}, we first have
\begin{align*}
|\eqref{we2t1}| &\lesssim \int_\eps^\infty\iint |t|^{-1-\frac15}\langle\log \langle t\rangle\rangle|\eta|^{\frac15}|\sigma|^{\frac15} |\F_2^{-1}\{G_\xi[f(t),f(t),f(t)]\}(\eta,\sigma)|\,d\eta\,d\sigma  \\
& \lesssim \int_\eps^\infty |t|^{-1-\frac15}\langle\log \langle t\rangle\rangle \|f(t)\|_{H^{0,1}}^3\,dt  \\
&\lesssim \eps^3 \int_\eps^\infty |t|^{-1-\frac15}\langle t\rangle^{3\delta}\langle \log \langle t\rangle\rangle\,dt \lesssim \eps^{\frac{14}{5}}.
\end{align*}
Next, by Hausdorff--Young and Lemma~\ref{L:PW},
\begin{align*}
|\eqref{we2t2}|&\lesssim \int_\eps^\infty \langle \log \langle t\rangle \rangle \|a |u(t)|^2u(t)\|_{L^1}\,dt \\
& \lesssim \int_\eps^\infty \langle\log \langle t\rangle\rangle \|a\|_{L^1} \|u(t)\|_{L^\infty}^3 \,dt  \lesssim \eps^3\int_\eps^\infty \langle\log \langle t\rangle \rangle\langle t\rangle^{-\frac32}\,dt \lesssim \eps^3. 
\end{align*}
Combining the preceding estimates with \eqref{L:phase}, we derive \eqref{w-expand2-temp}.

We now analyze each term in \eqref{w-expand2-temp} more closely.  We show that up to acceptable errors, we may replace the full solution with its initial data:

\begin{lemma}\label{L:cubic-main} The following approximations hold. First,
\begin{equation}
\int_\eps^\infty \mathcal{G}_t[f(t),f(t),f(t)] \,dt = \int_\eps^\infty \mathcal{G}_t[u_0,u_0,u_0]\,dt + \mathcal{O}(\eps^4)\label{G-replacement} \\
\end{equation}
in $L_\xi^\infty$. Next, for any test function $\psi$, 
\begin{equation}\label{A-replacement}\int_\eps^\infty \langle \F[e^{-it\Delta}\{a|u|^2 u\}],\psi\rangle\,dt =\int_\eps^\infty \langle a|e^{it\Delta}u_0|^2e^{it\Delta}u_0,e^{it\Delta}\check\psi\rangle\,dt + \mathcal{O}(\eps^4).
\end{equation}
\end{lemma}

\begin{proof} We begin with \eqref{G-replacement}.  Writing
\[
f(t)=u_0 + \int_0^t \partial_s f(s)\,ds, 
\]
we find that it suffices to prove that
\[
\int_\eps^\infty \mathcal{G}_t\biggl[g,h,\int_0^t \partial_s f(s)\,ds\biggr] \,dt = \mathcal{O}(\eps^4)
\]
in $L_\xi^\infty$, where
\[
g,h\in\biggl\{u_0,\int_0^t \partial_s f\,ds\biggr\}. 
\]
For each such term, we use Lemma~\ref{L:trilinear} to estimate 
\begin{align*}
\int_\eps^\infty& \biggl|\mathcal{G}_t\biggl[g,h,\int_0^t\partial_s f(s)\,ds\biggr](\xi)\biggr|\,dt \\
& \lesssim \int_\eps^\infty |t|^{-1-\frac15}|\eta|^{\frac15}|\sigma|^{\frac15}\biggl|\F_2^{-1}\biggl\{G_\xi\biggl[g,h,\int_0^t \partial_s f(s)\,ds\biggr]\biggr\}(\eta,\sigma)\biggr|\,d\eta\,d\sigma\,dt \\
& \lesssim \int_\eps^\infty |t|^{-1-\frac15} \|g\|_{H^{0,1}}\| h\|_{H^{0,1}} \biggl\| \int_0^t \partial_s f(s)\,ds\biggr\|_{H^{0,1}} \,dt 
\end{align*}
uniformly in $\xi$. Noting that the estimates in the preceding section imply
\[
\| \langle x\rangle \int_0^t \partial_s f(s)\,ds\|_{L^2} \lesssim \langle t\rangle^{3\delta}\eps^3,
\]
we see that
\[
\|g\|_{H^{0,1}}+\|h\|_{H^{0,1}}\lesssim \eps + \eps^3 \langle t\rangle^{3\delta}.
\]
It follows that
\begin{align*}
\int_\eps^\infty \biggl\|\mathcal{G}_t\biggl[g,h,\int_0^t\partial_s f(s)\,ds\biggr]\biggr\|_{L_\xi^\infty}\,dt  \lesssim \int_\eps^\infty |t|^{-1-\frac15}\{\eps^5\langle t\rangle^{3\delta}+\eps^9\langle t\rangle^{9\delta}\}\,dt  \lesssim \eps^{\frac{24}{5}},
\end{align*}
which is acceptable. 

We turn to \eqref{A-replacement}.  Fixing a test function $\psi$, we see that it suffices to prove 
\[
\int_\eps^\infty \langle a[|u|^2u - |e^{it\Delta}u_0|^2 e^{it\Delta}u_0],e^{it\Delta}\check{\psi}\rangle\,dt = \mathcal{O}(\eps^4).
\]
To prove this we first note that by the Duhamel formula for \eqref{nls}, we have 
\[
u(t)-e^{it\Delta}u_0 = N(t):=-i\int_0^t e^{i(t-s)\Delta}[(1+a)|u|^2 u](s)\,ds. 
\]
Thus by the dispersive estimate, Sobolev embedding, unitarity of $e^{it\Delta}$, and Lemma~\ref{L:PW}, we have 
\begin{align*}
\int_\eps^\infty& \bigl|\langle a[|u(t)|^2u(t) - |e^{it\Delta}u_0|^2 e^{it\Delta}u_0],e^{it\Delta}\check{\psi}\rangle\bigr|\,dt \\
& \lesssim \int_0^\infty\bigl\| a\bigl\{|u(t)|^2+|e^{it\Delta}u_0|^2\bigr\}|u(t)-e^{it\Delta}u_0|\cdot e^{it\Delta}\check{\psi}\bigr\|_{L_x^1}\,dt \\
& \lesssim \int_0^\infty \|a\|_{L_x^2}\|N(t)\|_{L_x^2} \bigl\{\|u(t)\|_{L_x^\infty}^2+\|e^{it\Delta}u_0\|_{L_x^\infty}^2\bigr\}\|e^{it\Delta}\check\psi\|_{L_x^\infty}\,dt \\
& \lesssim_a \eps^2 \int_0^\infty\langle t\rangle^{-\frac32}\|\psi\|_{H^{1,1}} \int_0^t \|1+a\|_{L_x^\infty}\| |u(s)|^2 u(s)\|_{L_x^2}\,ds\,dt \\
& \lesssim_{a,\psi} \eps^2\int_0^\infty \langle t\rangle^{-\frac32}\int_0^t \|u(s)\|_{L^\infty}^2 \|u(s)\|_{L^2}\,ds\,dt \\
& \lesssim_{a,\psi} \eps^5\int_0^\infty \langle t\rangle^{-\frac32}\int_0^t \langle s\rangle^{-1}\,ds\,dt \\
& \lesssim_{a,\psi}\eps^5\int_0^\infty\langle t\rangle^{-\frac32}\langle\log\langle t\rangle\rangle\,dt \lesssim_{a,\varphi}\eps^5,
\end{align*}
which is acceptable. \end{proof}

We return to the expansion for $w_+$ given in \eqref{w-expand1}--\eqref{w-expand3} and pair the expression with $\hat\varphi$. We insert \eqref{w-expand1-new} for \eqref{w-expand1} and combine \eqref{w-expand2-temp} with Lemma~\ref{L:cubic-main} to replace the terms \eqref{w-expand2}--\eqref{w-expand3}.  Recalling $u_0=\eps\varphi$, this yields
\begin{align*}
\langle S_a(\eps\varphi),\hat\varphi\rangle &  
= \eps\langle\hat\varphi,\hat\varphi\rangle + \tfrac{1}{2i}\log(1+\tfrac{1}{2\eps})\langle |w_+|^2 w_+,\hat\varphi\rangle \\
& \quad -i \eps^3\int_\eps^\infty \langle \mathcal{G}_t[\varphi,\varphi,\varphi],\hat\varphi\rangle\,dt \\
& \quad -i \eps^3\int_\eps^\infty a(x)|e^{it\Delta}\varphi(x)|^4\,dx\,dt + \mathcal{O}(\eps^4). 
\end{align*}
Comparing the identity above with \eqref{Sa-structure}, we see that to complete the proof of Proposition~\ref{P:Sa} it suffices to verify the following:
\begin{align}
&\int_0^\eps \int_\R a(x)|e^{it\Delta}\varphi(x)|^4\,dx\,dt = \mathcal{O}(\eps), \label{obvious} \\
& \int_\eps^\infty\tfrac{1}{i}\langle \mathcal{G}_t[\varphi,\varphi,\varphi],\hat\varphi\rangle\,dt = \mathcal{Q}_\eps[\varphi], \label{not-obvious}
\end{align}
where $\mathcal{Q}_\eps$ is as in \eqref{Q_eps}. 

The estimate \eqref{obvious} follows from the straightforward bound
\begin{align*}
\int_0^\eps\int a(x)|e^{it\Delta}\varphi(x)|^4\,dx\,dt & \lesssim \eps \|a\|_{L^\infty}\|e^{it\Delta}\varphi\|_{L_t^\infty L_x^4}^4 \lesssim_a \eps \|\varphi\|_{H^1}^4,
\end{align*}
where we have applied Sobolev embedding and unitary of $e^{it\Delta}$. 

The identity \eqref{not-obvious} follows from a straightforward calculation: recalling the definition in \eqref{def:calG} and the identity in \eqref{F2-id}, we have
\begin{align*}
\int_\eps^\infty& \tfrac{1}{i}\langle\mathcal{G}_t[\varphi,\varphi,\varphi],\hat\varphi\rangle\,dt \\
& = \int_\eps^\infty\tfrac{1}{2it}\iiint[e^{-i\frac{\eta\sigma}{2t}}-1]\F_2^{-1}\{G_\xi[\varphi,\varphi,\varphi]\}(\eta,\sigma)\bar{\hat\varphi}(\xi)\,d\xi\,d\eta\,d\sigma\,dt \\
& = \int_\eps^\infty \tfrac{1}{2it}\iiiint [e^{-i\frac{\eta\sigma}{2t}}-1]\varphi(z-\eta)\bar \varphi(z) \varphi(z-\sigma)\bar{\hat{\varphi}}(\xi)e^{i\xi[\eta+\sigma-z]}\,dz\,d\xi\,d\eta\,d\sigma\,dt \\
& = \int_\eps^\infty \tfrac{1}{2it}\iiiint [e^{-i\frac{\eta\sigma}{2t}}-1]\varphi(z-\eta)\bar\varphi(z)\varphi(z-\sigma)\bar\varphi(z-\eta-\sigma)\,dz\,d\eta\,d\sigma\,dt \\
& = \mathcal{Q}_\eps[\varphi],
\end{align*}
as desired. \end{proof}

We now turn to the proof of our main result, Theorem~\ref{T2}.

\begin{proof}[Proof of Theorem~\ref{T2}] We let $a$ and $b$ be admissible in the sense of Definition~\ref{D:admissible} and suppose that the modified scattering maps $S_a$ and $S_b$ agree on their common domain. We now fix $\varphi\in\mathcal{S}$ and sufficiently small $\eps>0$ and apply the main identity \eqref{Sa-structure} in Proposition~\ref{P:Sa} to both $S_a(\eps\varphi)$ and $S_b(\eps\varphi)$.  As $S_a(\eps\varphi)=S_b(\eps\varphi)$, this implies 
\[
\int_0^\infty\int_\R a(x)|e^{it\Delta}\varphi(x)|^4\,dx\,dt = \int_0^\infty \int_\R b(x)|e^{it\Delta}\varphi(x)|^4\,dx\,dt+\mathcal{O}(\eps)
\]
for any $\eps>0$. It follows that 
\[
\int_0^\infty \int_\R a(x)|e^{it\Delta}\varphi(x)|^4\,dx\,dt = \int_0^\infty \int_\R b(x)|e^{it\Delta}\varphi(x)|^4\,dx\,dt\qtq{for all}\varphi\in\mathcal{S}.
\]
Thus the proof of Theorem~\ref{T2} reduces to showing that if $a$ is admissible in the sense of Definition~\ref{D:admissible} and 
\begin{equation}\label{IF}
\int_0^\infty \int_\R a(x)|e^{it\Delta}\varphi(x)|^4\,dx\,dt = 0 \qtq{for all}\varphi\in\mathcal{S},
\end{equation}
then $a\equiv 0$. 

Given $\varphi\in\mathcal{S}$, we define the function
\[
K_\varphi(x) = \int_0^\infty |e^{it\Delta}\varphi(x)|^4\,dt 
\]
and first prove that $K_\varphi\in L^2$. To see this, we use Minkowski's integral inequality followed by the dispersive estimate and Sobolev embedding to estimate 
\begin{align*}
\bigl\|\,\|e^{it\Delta}\varphi\|_{L_t^4}^4 \bigr\|_{L_x^2} & \lesssim \|e^{it\Delta}\varphi\|_{L_x^8 L_t^4}^4  \\ 
& \lesssim \|e^{it\Delta}\varphi\|_{L_t^4 L_x^8}^4 \lesssim \|\langle t\rangle^{-\frac{3}{8}}\|_{L_t^4}^4 \|\varphi\|_{H^1}^4\lesssim_\varphi 1.\end{align*}

Now we specialize to the choice
\[
\varphi(x)=\exp\{-\tfrac{x^2}{4}\},\qtq{in which case} e^{it\Delta}\varphi(x) = \bigl[\tfrac{1}{1+it}\bigr]^{\frac12} \exp\{-\tfrac{x^2}{4(1+it)}\}
\]
(see \cite{Visan}).  In particular, 
\[
K_\varphi(x) = \int_0^\infty \tfrac{1}{1+t^2}\exp\bigl\{-\tfrac{x^2}{1+t^2}\bigr\}\,dt. 
\]

Now suppose that \eqref{IF} holds.  Then, by translation invariance for the linear Schr\"odinger equation, we have that
\[
\int_\R a(x)K_\varphi(x-x_0)\,dx = 0 \qtq{for all} x_0\in\R.
\]
Thus, to deduce that $a\equiv 0$, it suffices to verify that $\hat K_\varphi\neq 0$ almost everywhere.  In fact, for $\xi\neq 0$, we can compute $\hat K_\varphi(\xi)$ explicitly as a Gaussian integral:
\begin{align*}
\hat K_\varphi(\xi) & = \int_0^\infty(1+t^2)^{-1}\int_\R \exp\bigl\{-ix\xi -\tfrac{x^2}{1+t^2}\bigr\}\,dx \,dt \\
& = \sqrt{\pi}\int_0^\infty (1+t^2)^{-\frac12} \exp\bigl\{-\tfrac{\xi^2(1+t^2)}{4}\bigr\}\,dt.
\end{align*}
As $\hat K_\varphi(\xi)$ is the integral of a positive function, the result follows. \end{proof}


\begin{thebibliography}{100}
\bibitem{SBUW} A. S\'a Barreto, G. Uhlmann, and Y. Wang, \emph{Inverse scattering for critical semilinear wave equations}.  Pure Appl. Anal. \textbf{4} (2022), no. 2, 191--223.

\bibitem{SBS2} A. S\'a Barreto and P. Stefanov, \emph{Recovery of a cubic non-linearity in the wave equation in the weakly non-linear regime.} Comm. Math. Phys. \textbf{392} (2022), no. 1, 25--53.

\bibitem{CarlesGallagher} R. Carles and I. Gallagher, \emph{Analyticity of the scattering operator for semilinear dispersive equations.} Comm. Math. Phys. \textbf{286} (2009), no. 3, 1181--1209.

\bibitem{ChenPusateri0} G. Chen and F. Pusateri, \emph{The 1-dimensional nonlinear Schr\"odinger equation with a weighted $L^1$ potential.} Anal. PDE \textbf{15} (2022), no. 4, 937--982.

\bibitem{ChenPusateri} G. Chen and F. Pusateri, \emph{On the 1$d$ cubic NLS with a non-generic potential.} Preprint {\tt arXiv:2205.01487}. 

\bibitem{DeiftZhou} P. Deift and X. Zhou, \emph{Long-time asymptotics for solutions of the NLS equation with initial data in a weighted Sobolev space.} Dedicated to the memory of Jürgen K. Moser. Comm. Pure Appl. Math. \textbf{56} (2003), no. 8, 1029--1077.

\bibitem{EW} V. Enss and R. Weder, \emph{The geometrical approach to multidimensional inverse scattering.} J. Math. Phys. \textbf{36} (1995), no. 8, 3902--3921.

\bibitem{HayashiNaumkin} N. Hayashi and P. Naumkin, \emph{Asymptotics for large time of solutions to the nonlinear Schr\"odinger and Hartree equations.} Amer. J. Math. \textbf{120} (1998), no. 2, 369--389.

\bibitem{HMG} C. Hogan, J. Murphy, and D. Grow, \emph{Recovery of a cubic nonlinearity for the nonlinear Schr\"odinger equation.} J. Math. Anal. Appl. \textbf{522} (2023), no. 1, Article 127016. 

\bibitem{IfrimTataru} M. Ifrim and D. Tataru, \emph{Global bounds for the cubic nonlinear Schr\"odinger equation (NLS) in one space dimension.} Nonlinearity \textbf{28} (2015), no. 8, 2661--2675.

\bibitem{KaPu} J. Kato and F. Pusateri, \emph{A new proof of long-range scattering for critical nonlinear Schr\"odinger equations.} Differential Integral Equations \textbf{24} (2011), no 9-10, 923--940. 

\bibitem{KMV} R. Killip, J. Murphy, and M. Visan, \emph{The scattering map determines the nonlinearity.} To appear in Proc. Amer. Math. Soc. Preprint {\tt arXiv:2207.02414}.

\bibitem{LeeYu} Z. Lee and X. Yu, \emph{A note on recovering the nonlinearity for generalized higher-order Schr\"odinger equations.} Preprint {\tt arXiv:2303.06312}.

\bibitem{LindbladSoffer} H. Lindblad and A. Soffer, \emph{Scattering and small data completeness for the critical nonlinear Schr\"odinger equation.} Nonlinearity 19 (2006), no. 2, 345--353.

\bibitem{MMS} S. Masaki, J. Murphy, and J. Segata, \emph{Modified scattering for the one-dimensional cubic NLS with a repulsive delta potential.} Int. Math. Res. Not. IMRN 2019, no. 24, 7577--7603.

\bibitem{MorStr} C. S. Morawetz and W. A. Strauss, \emph{On a nonlinear scattering operator.} Comm. Pure Appl. Math. \textbf{26} (1973), 47--54.


\bibitem{MurphySurvey} J. Murphy, \emph{A review of modified scattering for the 1d cubic NLS.} Harmonic analysis and nonlinear partial differential equations, 119--146, RIMS Kokyuroku Bessatsu, B88, Res. Inst. Math. Sci. (RIMS), Kyoto, 2021.

\bibitem{Murphy} J. Murphy, \emph{Recovery of a spatially-dependent coefficient from the NLS scattering map.} Preprint {\tt arXiv:2209.07680}.

\bibitem{Naumkin} I. Naumkin, \emph{Sharp asymptotic behavior of solutions for cubic nonlinear Schr\"odinger equations with a potential.} J. Math. Phys. \textbf{57} (2016), no. 5, 051501, 31 pp.

\bibitem{Naumkin2} I. Naumkin, \emph{Nonlinear Schr\"odinger equations with exceptional potentials.} J. Differential Equations \textbf{265} (2018), no. 9, 4575--4631.

\bibitem{PauStr} B. Pausader and W. A. Strauss, \emph{Analyticity of the nonlinear scattering operator.} Discrete Contin. Dyn. Syst. \textbf{25} (2009), no. 2, 617--626.

\bibitem{Sasaki2} H. Sasaki, \emph{The inverse scattering problem for Schr\"odinger and Klein-Gordon equations with a nonlocal nonlinearity,} Nonlinear Analysis, Theory, Methods \& Applications \textbf{66} (2007), 1770--1781. 

\bibitem{Sasaki} H. Sasaki, \emph{Inverse scattering for the nonlinear Schr\"odinger equation with the Yukawa potential.} Comm. Partial Differential Equations \textbf{33} (2008), no. 7-9, 1175--1197. 

\bibitem{SasakiWatanabe} H. Sasaki and M. Watanabe, \emph{Uniqueness on identification of cubic convolution nonlinearity.} J. Math. Anal. Appl. \textbf{309} (2005), no. 1, 294--306. 

\bibitem{Strauss} W. A. Strauss, \emph{Nonlinear scattering theory.} In Scattering Theory in Mathematical Physics, edited by J. A. Lavita and J. P. Marchand. D. Reidel, Dordrecht, Holland/Boston, 1974, pp. 53--178. 

\bibitem{Watanabe0} M. Watanabe, \emph{Inverse scattering for the nonlinear Schr\"odinger equation with cubic convolution nonlinearity.} Tokyo J. Math. \textbf{24} (2001), no. 1, 59--67.

\bibitem{Watanabe} M. Watanabe, \emph{Time-dependent method for non-linear Schr\"odinger equations in inverse scattering problems.} J. Math. Anal. Appl. \textbf{459} (2018), no. 2, 932--944.

\bibitem{Weder0} R. Weder, \emph{Inverse scattering for the nonlinear Schr\"odinger equation.} Comm. Partial Differential Equations \textbf{22} (1997), no. 11-12, 2089--2103. 
 
\bibitem{Weder1} R. Weder, \emph{Inverse scattering for the non-linear Schr\"odinger equation: reconstruction of the potential and the non-linearity.} Math. Methods Appl. Sci. \textbf{24} (2001), no. 4, 245--25 

\bibitem{Weder6} R. Weder, \emph{$L^p$-$L^{p'}$ estimates for the Schr\"odinger equation on the line and inverse scattering for the nonlinear Schr\"odinger equation with a potential.} J. Funct. Anal. \textbf{170} (2000), no. 1, 37--68.

\bibitem{Weder3} R. Weder, \emph{Inverse scattering for the nonlinear Schr\"odinger equation. II. Reconstruction of the potential and the nonlinearity in the multidimensional case.} Proc. Amer. Math. Soc. \textbf{129} (2001), no. 12, 3637--3645. 

\bibitem{Weder4} R. Weder, \emph{Inverse scattering for the non-linear Schr\"odinger equation: reconstruction of the potential and the non-linearity.} Math. Methods Appl. Sci. \textbf{24} (2001), no. 4, 245--254.

\bibitem{Weder5} R. Weder, \emph{Multidimensional inverse scattering for the nonlinear Klein-Gordon equation with a potential.} J. Differential Equations \textbf{184} (2002), no. 1, 62--77. 

\bibitem{Visan} M. Visan, \emph{Dispersive Equations}, in ``Dispersive Equations and Nonlinear Waves, Oberwolfach Seminars'' \textbf{45}, Birkhauser/Springer Basel AG, Basel, 2014.

\end{thebibliography}
\end{document}